\numberwithin{equation}{section}
\newcommand{\bb}{\bm{b}}
\newcommand{\Mm}{{\bf{M}}}
\newcommand{\Pp}{\mathbb{P}}
\newcommand{\Qq}{\mathbb{Q}}
\newcommand{\Rr}{\mathbb{R}}
\newcommand{\Center}{\operatorname{center}}
\newcommand{\Exc}{\operatorname{Exc}}
\newcommand{\glct}{\operatorname{glct}}
\newcommand{\Supp}{\operatorname{Supp}}
\newcommand{\mult}{\operatorname{mult}}
\newcommand{\Dd}{\mathcal{D}}
\newcommand{\Oo}{\mathcal{O}}
\newtheorem{thm}{Theorem}[section]
\newtheorem{lem}[thm]{Lemma}
\newtheorem{exprop}[thm]{Example-Proposition}
\theoremstyle{definition}
\newtheorem{defn}[thm]{Definition}
\newtheorem{ques}[thm]{Question}
\theoremstyle{definition}
\newtheorem{rem}[thm]{Remark}
\newtheorem{ex}[thm]{Example}
\theoremstyle{definition}
\begin{document}

\title{Number of singular points on projective surfaces}
\author{Jihao Liu and Lingyao Xie}

\address{Department of Mathematics, The University of Utah, Salt Lake City, UT 84112, USA}
\email{lingyao@math.utah.edu}

\address{Department of Mathematics, The University of Utah, Salt Lake City, UT 84112, USA}
\email{jliu@math.utah.edu}

\subjclass[2010]{Primary 14E30, 
Secondary 14B05.}
\date{\today}

\begin{abstract}
The number of singular points on a klt Fano surface $X$ is $\leq 2\rho(X)+2$.
\end{abstract}

\maketitle

\tableofcontents

\section{Introduction}

We work over the field of complex numbers $\mathbb C$. For any normal projective variety $X$, we let $\rho(X)$ be the Picard number of $X$.

Let $X$ be a klt Fano surface, i.e. a klt projective surface such that $-K_X$ is ample. It is interesting to ask when are the number of singular points of $X$ bounded from above, and to give an estimate of the maximal number of singular points on $X$.

For simplicity, for any surface $X$, we let $n(X)$ be the number of singular points on $X$. When $X$ is klt Fano, Keel and M\textsuperscript{c}Kernan show that $n(X)\leq 5$ when $\rho(X)=1$ \cite[Page 72]{KM99}. This is strengthened by Belousov who shows that $n(X)\leq 4$:
\begin{thm}[{\cite[Theorem 1.2]{Bel08},\cite[Theorem 1.1]{Bel09}}]\label{thm: four singular point}
Let $X$ be a klt Fano surface such that $\rho(X)=1$. Then $n(X)\leq 4$.
\end{thm}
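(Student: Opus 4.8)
The plan is to argue via the minimal resolution, Noether's formula, and a minimal model program, reducing to a finite list of dual graphs. By the Keel--McKernan bound $n(X)\le 5$ recalled above it suffices to exclude $n(X)=5$, so assume $\Sing(X)=\{p_1,\dots,p_5\}$. Let $\pi\colon Y\to X$ be the minimal resolution; then $Y$ is a smooth projective rational surface (a klt del Pezzo surface is rational), $\Exc(\pi)=\bigsqcup_{i=1}^5\Gamma_i$ with $\Gamma_i$ the connected configuration of rational curves over $p_i$, and $\rho(Y)=1+N$ where $N=\sum_i\#\Gamma_i\ge 5$. Noether's formula gives $K_Y^2=10-\rho(Y)=9-N\le 4$. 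Writing $K_Y=\pi^*K_X+\sum_i a_iE_i$ with $-1<a_i\le 0$ and $E_i^2\le -2$ for every $\pi$-exceptional $E_i$ (minimality of $\pi$), we have $-K_Y=\pi^*(-K_X)+\sum_i(-a_i)E_i$, a sum of a nef and big divisor and an effective divisor supported on $\Exc(\pi)$, so $-K_Y$ is big; moreover $K_X^2=K_Y^2-\bigl(\sum_ia_iE_i\bigr)^2=(9-N)+\sum_{i=1}^5\delta_i$, where $\delta_i:=-\bigl(\sum_{E_j\subseteq\Gamma_i}a_jE_j\bigr)^2\ge 0$ depends only on the dual graph of $p_i$ and vanishes exactly when $p_i$ is Du Val.

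Next I would use the classification of $2$-dimensional log terminal singularities: each $\Gamma_i$ is a tree of smooth rational curves whose dual graph is either a chain (cyclic quotient, type $A$) or a tree with a single trivalent vertex (types $D$, $E$), and the self-intersections $E_j^2$ and the discrepancies $a_j$ along $\Gamma_i$ determine one another through the equations $\pi^*K_X\cdot E_j=K_Y\cdot E_j-\sum_k a_k(E_k\cdot E_j)=0$. Since $\rho(Y)=1+N\ge 6$ the surface $Y$ is not minimal and carries a $(-1)$-curve; run a $K_Y$-MMP $Y=Y_0\to\cdots\to Y_m=Z$ with $Z\cong\mathbb{P}^2$ or $Z\cong\mathbb{F}_e$. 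No component of $\Exc(\pi)$ is a $(-1)$-curve, and a $(-1)$-curve of $Y$ lying in a $\pi$-fibre would be such a component, so every contracted $(-1)$-curve is non-exceptional for $\pi$ and maps to a genuine curve on $X$. Choosing at each step a $(-1)$-curve meeting $\Exc(\pi)$ as little as possible, one shows the images of $\Gamma_1,\dots,\Gamma_5$ stay pairwise disjoint and negative-definite for all but a bounded number of final steps, so that $Z$ (with $\rho(Z)\le 2$) must carry five disjoint configurations of contractible rational curves; combined with $K_Y^2=9-N$ this bounds each $\#\Gamma_i$ and reduces $(\Gamma_1,\dots,\Gamma_5)$ to a finite explicit list, dominated by the case in which every $p_i$ is a mild cyclic quotient. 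In the Du Val case this list is governed by which full-corank root subsystems with $\ge 5$ components inside the root lattice $R_{K_X^2}$ of the associated weak del Pezzo surface are represented by effective $(-2)$-curves: for low degree the obstruction is either lattice-theoretic (e.g. $5A_1$ does not embed in $D_5$ as a root subsystem) or, when the embedding exists, geometric non-realizability (e.g. $7A_1\subset E_7$ is not represented by $(-2)$-curves on a degree-$2$ weak del Pezzo); the strictly log terminal case requires the analogous realizability statement for the extended list of dual graphs.

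Finally, for every configuration on this finite list one checks that some standing hypothesis fails: either $K_X^2=(9-N)+\sum_i\delta_i\le 0$, contradicting ampleness of $-K_X$; or the numerics force $\rho(X)>1$; or some discrepancy satisfies $a_j\le -1$, contradicting that $X$ is klt; or the polarization induced on $Z$ is incompatible with $-K_Y$ being big. This excludes $n(X)=5$, whence $n(X)\le 4$. I expect the main obstacle to be the second paragraph: controlling the interaction of the MMP with five exceptional configurations (a single contracted $(-1)$-curve may meet two distinct $\Gamma_i$ and merge or destroy them, so the order of contractions must be chosen carefully), and handling the type $D$ and $E$ dual graphs and the non-Gorenstein cyclic singularities separately; it is here, rather than in the purely lattice-theoretic counting, that the geometric realizability of curve configurations on weak del Pezzo surfaces of small degree is the crux. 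For the Du Val case one may instead simply quote the classical classification of rank-one Du Val del Pezzo surfaces and read off $n(X)\le 4$ directly.
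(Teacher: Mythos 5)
The paper does not prove this statement at all: it is quoted as Belousov's theorem \cite[Theorem 1.2]{Bel08}, \cite[Theorem 1.1]{Bel09} (with the weaker bound $n(X)\le 5$ attributed to \cite{KM99}), so what you are really attempting is a re-proof of an external result, and as written your text is an outline, not a proof. The first paragraph is fine (Noether's formula on the rational surface $Y$, $K_X^2=(9-N)+\sum_i\delta_i$ with $\delta_i\ge 0$, bigness of $-K_Y$), but every step that would actually exclude $n(X)=5$ is deferred to phrases like ``one shows'', ``reduces to a finite explicit list'', ``one checks that some standing hypothesis fails'': no list is produced and no case is checked. That deferred material is precisely the content of Belousov's papers, so the proposal has a genuine gap at its core, which you yourself identify as ``the main obstacle''.

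Two of the intermediate claims are moreover wrong or incoherent as stated. First, ``every contracted $(-1)$-curve is non-exceptional for $\pi$'' is only valid for the first step of the MMP on $Y$: after a contraction, the images of $\pi$-exceptional components can acquire self-intersection $-1$ (or become non-negative) and can be contracted later, which is exactly the merging/destruction phenomenon you flag; so you cannot conclude that the five configurations survive as curve configurations downstream. Second, the assertion that $Z\cong\mathbb{P}^2$ or $\mathbb{F}_e$ ``must carry five disjoint configurations of contractible rational curves'' cannot be what any correct argument shows: $\mathbb{P}^2$ has no negative curves and $\mathbb{F}_e$ has only the negative section, so such configurations cannot persist to $Z$; the whole difficulty is to quantify how much of $\Gamma_1,\dots,\Gamma_5$ is lost along the MMP, and that is left unproved. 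Finally, the lattice-theoretic/realizability discussion addresses only the Du Val case, which one could indeed settle by quoting the classification in \cite{MZ88}; the genuinely klt non-Gorenstein case (chains with weights $\le -2$ and $D$/$E$-shaped trees, where $\delta_i>0$) is the bulk of the theorem and is not reduced to any finite verification in your sketch. So the proposal is a plausible strategy statement, but it does not constitute a proof of the theorem, nor does it match the paper, whose ``proof'' is a citation.
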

This bound is optimal even for Fano surfaces with canonical singularities by \cite{MZ88} (see also \cite{Fur86,Zha87,Zha88,Zha89} and Example \ref{ex: lc is necessary}(1)). In this note, we show that $n(X)$ is bounded from above by a number depending only on $\rho(X)$.
\begin{thm}\label{thm: number sing surface is 2rho+2}
Let $X$ be a klt Fano surface. Then $n(X)\leq 2\rho(X)+2$.
\end{thm}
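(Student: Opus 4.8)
The plan is to induct on $\rho(X)$ and run a $K_X$-MMP, with the base case $\rho(X)=1$ supplied by Theorem \ref{thm: four singular point}. Since $-K_X$ is ample, every extremal ray of $\overline{\NE}(X)$ is $K_X$-negative, so the Cone Theorem for klt surfaces applies: $\overline{\NE}(X)$ is rational polyhedral and each extremal ray gives a contraction $f\colon X\to X'$ with $X'$ $\bQ$-factorial klt. If $K_X=f^*K_{X'}+aC$ with $C$ the contracted curve, then $f^*(-K_{X'})=-K_X+aC$ is nef and big and vanishes exactly along the contracted locus, so $-K_{X'}$ is ample and $X'$ is again a klt Fano surface. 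Moreover a fiber-type contraction forces $\rho(X)\le 2$; hence if $\rho(X)\ge 3$ some extremal contraction is divisorial, while if $\rho(X)=2$ then either some extremal contraction is divisorial or both extremal rays are of fiber type.

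The heart of the proof is the estimate: \emph{if $f\colon X\to X'$ is a divisorial extremal contraction of a klt Fano surface, contracting the irreducible curve $C$ to a point $p$, then $n(X)\le n(X')+2$.} As $f$ is an isomorphism away from $C$, this reduces to bounding $\#(\Sing(X)\cap C)$. From $-K_X$ ample and $C^2<0$ we get $(K_X+C)\cdot C<0$, and adjunction on $C$ gives $\deg\bigl(K_C+\Diff_C(0)\bigr)=(K_X+C)\cdot C<0$; this already forces $p_a(C)=0$, so $C\cong\bP^1$ and $\deg\Diff_C(0)<2$. Since each point of $\Sing(X)\cap C$ contributes at least $\tfrac12$ to $\Diff_C(0)$ (the different of a curve on a klt surface at a singular point has coefficient $1-\tfrac1m\ge\tfrac12$), we get $\#(\Sing(X)\cap C)\le 3$. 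When $p\in\Sing(X')$ this suffices, as the (at most three) singular points of $X$ on $C$ collapse to the single point $p$. When $p\notin\Sing(X')$ I will use that an extremal divisorial contraction of a klt surface to a smooth point is a weighted blow-up — equivalently, by minimality the exceptional configuration over $p$ on the minimal resolution of $X$ is a chain whose only possible vertex of self-intersection $>-2$ is the one dominating $C$ — so $C$ meets the contracted $(\le-2)$-curves only at its two ``ends'', whence $\#(\Sing(X)\cap C)\le 2$. Either way $n(X)\le n(X')+2$.

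Granting the estimate, the induction closes at once. If $\rho(X)=1$, Theorem \ref{thm: four singular point} gives $n(X)\le 4=2\rho(X)+2$. If $\rho(X)\ge 2$ and $X$ admits a divisorial extremal contraction $X\to X'$, then $\rho(X')=\rho(X)-1$ and, by induction, $n(X)\le n(X')+2\le(2\rho(X')+2)+2=2\rho(X)+2$. In the only remaining case — $\rho(X)=2$ with both extremal rays of fiber type — I will show $X$ is in fact smooth, so $X\cong\bP^1\times\bP^1$ and $n(X)=0$: picking one fibration $f\colon X\to\bP^1$, the relative Picard number being $1$ forces every fiber of $f$ to be irreducible (two components of a reducible fiber would be proportional classes, contradicting the negative-definiteness of the intersection form on the components of a fiber); a reduced fiber is then a reduced irreducible curve of arithmetic genus $0$, along which adjunction gives $\Diff=0$ and hence smoothness; and a non-reduced fiber $mC$ would satisfy $C^2=0$ and $K_X\cdot C=-2/m$, forcing the impossible value $p_a(C)=1-\tfrac1m$.

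The main obstacle is the core estimate when $C$ contracts to a smooth point: the adjunction bound alone yields only $\#(\Sing(X)\cap C)\le 3$, one too many, and closing the gap needs the structural fact that such a contraction is a weighted blow-up (equivalently, the chain structure of the exceptional divisor on the minimal resolution). The remaining ingredients — that the MMP stays among klt Fano surfaces, the arithmetic-genus computations, and the $\bP^1\times\bP^1$ case — are routine.
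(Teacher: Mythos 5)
Your reduction via the MMP and your divisorial-contraction estimate are broadly in the spirit of the paper (the paper's Lemma \ref{lem: divisorial contraction reduce sing by 2} even gives the cleaner statement that $C$ contains at most $2$ singular points in all cases, proved by contracting the strict transform of $C$ on the minimal resolution and producing a cycle in the dual graph, contradicting Lemma \ref{lem: certain blow up tree}; your ``weighted blow-up/chain'' invocation for the smooth-center case is only sketched, but the underlying fact is true and can be justified this way). The fatal problem is your terminal case: a klt Fano surface with $\rho(X)=2$ whose both extremal rays are of fiber type need \emph{not} be smooth, so the conclusion $X\cong\mathbb{P}^1\times\mathbb{P}^1$, $n(X)=0$ is false. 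Concretely, let $X=(\mathbb{P}^1\times\mathbb{P}^1)/\mathbb{Z}_2$, where the involution acts on each factor by $[u:v]\mapsto[u:-v]$. The quotient map is \'etale outside the four fixed points, so $-K_X$ is ample and $X$ has exactly four $A_1$ singularities; $\rho(X)=2$, and the two extremal contractions are the two induced fibrations $X\to\mathbb{P}^1$, so there is no divisorial extremal contraction. Each fibration has two fibers of multiplicity $2$, each passing through two of the singular points. Your genus argument ruling out non-reduced fibers is exactly where this goes wrong: on a singular surface you cannot use $p_a(C)=1+\tfrac12(K_X+C)\cdot C$; the correct statement is $\deg\bigl(K_{\overline{C}}+\Diff_{C}(0)\bigr)=(K_X+C)\cdot C=-2/m$, which only gives $\deg\Diff_{C}(0)=2-2/m$ --- perfectly consistent (for $m=2$ it is realized by two $A_1$ points contributing $\tfrac12$ each, as in the example). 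So multiple fibers through singular points do occur and your induction has no control over how many.

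Closing this gap is not a routine fix: in the $\rho=2$ Mori-fiber-space case you must show $n(X)\le 6$, and this is precisely the content of the paper's Lemma \ref{lem: number singlarity picard 2 case}, which requires two genuinely new inputs beyond your proposal: (i) each fiber contains at most $2$ singular points (again a minimal-resolution/tree argument, since adjunction alone would allow more), and (ii) at most $3$ fibers contain singular points, proved via the (generalized) canonical bundle formula --- each fiber through a singular point is non-reduced by Lemma \ref{lem: fibers containing singular point is not Cartier}, hence contributes multiplicity $\ge\tfrac12$ to the boundary on the base, giving at most $4$ such fibers, and the case of exactly $4$ is excluded by a klt-ness argument on a Hirzebruch model (note that $4$ fibers would only give the insufficient bound $8>2\rho+2$). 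Without an argument of this kind your induction cannot start at $\rho=2$ when no divisorial ray exists, so as written the proof is incomplete.
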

It is easy to see that Theorem \ref{thm: number sing surface is 2rho+2} and Theorem \ref{thm: four singular point} are equivalent when $\rho(X)=1$.

In fact, we can relax the assumption ``klt Fano" to ``$(X,B)$ is klt log Calabi-Yau for some boundary $B\not=0$" without changing the bound $2\rho(X)+2$. Moreover, we can relax the assumption ``klt Fano" to ``$(X,B)$ is lc and $-(K_X+B)$ is nef for some boundary $B$" if we allow a small increase on the bound $2\rho(X)+2$. We have the following result:
\begin{thm}\label{thm:General Calabi Yau Case}
Let $(X,B)$ be an lc surface pair such that $-(K_X+B)$ is nef. Then:
\begin{enumerate}
    \item $n(X)\leq \max\{2\rho(X)+10,16\}$.
    \item If $(X,B)$ is klt, $B\not=0$, and $K_X+B\equiv 0$, then $n(X)\leq 2\rho(X)+2$.
    \item If $X$ is klt and $K_X\not\equiv 0$, then $n(X)\leq 2\rho(X)+4$.
    \item If $X$ is klt but not canonical and $K_X\equiv 0$, then $n(X)\leq 2\rho(X)+7$.
    \item If $X$ is canonical and $K_X\equiv 0$, then $n(X)\leq 16$.
    \item If $X$ is not klt, then $n(X)\leq 2\rho(X)+10$.
    \item If $X$ is not klt and $-K_X$ is big and nef, then $n(X)\leq 2\rho(X)+7$.
\end{enumerate}
\end{thm}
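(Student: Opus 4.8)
The plan is to prove parts (2)--(7) and then deduce (1) by checking that every klt surface satisfying the hypotheses falls under (2), (3) or (4) and every non-klt one under (6), whence $n(X)\le\max\{2\rho(X)+2,\,2\rho(X)+4,\,2\rho(X)+7,\,16,\,2\rho(X)+10\}=\max\{2\rho(X)+10,16\}$. Two constructions recur. First, the minimal resolution $\pi\colon Y\to X$: here $Y$ is smooth, $n(X)$ is the number of connected components of $\Exc(\pi)$, and the log pullback $(Y,\Delta_Y)$ defined by $K_Y+\Delta_Y=\pi^*(K_X+B)$ satisfies $\Delta_Y\ge 0$ and $-(K_Y+\Delta_Y)=\pi^*(-(K_X+B))$ nef (and $\Delta_Y$ is the strict transform of $B$ when $X$ is canonical). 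Second, a $K_X$- or $(K_X+B)$-MMP, along which one records how the pair $(n,\rho)$ changes under each divisorial contraction.

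\textbf{Step 1 (the cases with $K_X\not\equiv 0$: parts (2), (3)).} Since $B\ge 0$ and $-(K_X+B)$ is nef, $-K_X$ is pseudo-effective; as $K_X\not\equiv 0$ it is not pseudo-effective, so $X$ is uniruled and a $K_X$-MMP terminates with a Mori fiber space $g\colon X'\to T$. Writing $B'$ for the strict transform of $B$, this pushforward preserves lc-ness of $(X',B')$ and nefness of $-(K_{X'}+B')$. If $T=\operatorname{pt}$ then $\rho(X')=1$ and $X'$ is klt Fano, so $n(X')\le 4=2\rho(X')+2$ by Theorem~\ref{thm: four singular point}. If $T$ is a curve then $\rho(X')=2$ and $g$ is a conic bundle; one classifies its degenerate fibers (each an irreducible, possibly non-reduced, conic, or of the form $\ell_1+\ell_2$ meeting at a cyclic quotient singularity of $X'$) and uses $(-(K_{X'}+B'))^2\ge 0$ and $-(K_{X'}+B')\cdot\ell_i\ge 0$ to bound the number of degenerate fibers and the singular points on each, getting an absolute bound $n(X')\le c_0$. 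Finally one proves the uniform estimate that a single MMP step, run in reverse, raises $n$ by at most $2$ and $\rho$ by exactly $1$ (the contracted curve meets at most two singular points of the surface above); hence $n(X)\le n(X')+2\,(\rho(X)-\rho(X'))$, which gives $2\rho(X)+2$ in the del Pezzo case and $2\rho(X)+c_0-4$ in the conic bundle case, the conic bundle analysis making $c_0$ small enough for the stated bounds ($+2$ in (2), where $K_X+B\equiv 0$ restricts the fibers further, and $+4$ in (3)).

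\textbf{Step 2 (the cases with $K_X\equiv 0$: parts (4), (5)).} If $X$ is canonical then $Y$ is smooth with $K_Y\equiv 0$, hence a K3, Enriques, abelian or bielliptic surface, and $X$ is the contraction of a disjoint union of Du Val configurations of $(-2)$-curves. Abelian and bielliptic surfaces carry no $(-2)$-curves; an Enriques surface has $\rho=10$, hence at most $9$ disjoint Du Val points; and on a K3 surface the set of $(-2)$-curves is finite, the span of a Du Val configuration is a negative definite sublattice of $\operatorname{NS}(Y)$, and the classification of such configurations forces at most $16$ of them, with equality for the sixteen nodes of a Kummer surface---this is (5). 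If $X$ is klt but not canonical with $K_X\equiv 0$, then $X$ has a non-Du Val quotient singularity; using adjunction along the $\pi$-exceptional curves together with the bounded combinatorics of log terminal surface singularities (dual graphs that are trees with at most one fork, of valency three) one reduces to the canonical situation at the cost of the extra $+7$.

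\textbf{Step 3 (the non-klt cases (6), (7), and the conclusion).} Now $(X,B)$ fails to be klt near some point or curve; running an MMP on a suitable dlt modification, the non-klt locus $N$ becomes a point or a curve along which $-(K_X+B)$ restricts to an anti-nef divisor, and adjunction on $N$ together with the classification of lc, non-klt, surface singularities bounds the singular points of $X$ on or adjacent to $N$ by an absolute constant, while the klt analysis of Steps 1--2 handles $X$ away from $N$; the bookkeeping gives $n(X)\le 2\rho(X)+10$, improved to $2\rho(X)+7$ when $-K_X$ is big and nef, since then bigness excludes the conic bundle output and bounds the length of the MMP. Parts (2)--(6) then yield (1). \textbf{The main obstacle} is the conic bundle analysis combined with the reverse-MMP bookkeeping of Step 1: one must classify the degenerate fibers of a conic bundle on a klt surface with $-(K_X+B)$ nef and bound the singular points they carry, and prove the uniform ``at most $+2$ per contraction'' estimate, which rests on a local analysis of divisorial extremal contractions of singular surfaces. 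Secondary difficulties are the clustering of singularities along the non-klt locus---precisely what degrades the constant to $+10$ (and $+7$) rather than $+2$---and the lattice-theoretic bound $16$ for Du Val configurations on K3 surfaces.
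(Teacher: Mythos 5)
Your overall skeleton (run a $K_X$-MMP to a Mori fiber space, use Belousov's bound when the base is a point, and add back at most $2$ singular points per divisorial contraction) is the same as the paper's for parts (2)(3), and your treatment of (5) via the classification of the minimal resolution (abelian/bielliptic/Enriques/K3, with the Peters-type bound of $16$ for K3's) is a legitimate variant of the paper's index-one-cover argument. But the proposal has genuine gaps exactly at the points you yourself flag as ``the main obstacle,'' and these are not routine. First, the Mori fiber space over a curve: on a surface with $\rho(X'/Z)=1$ every fiber is irreducible, so your picture of degenerate conics $\ell_1+\ell_2$ is not the right model, and the inequalities $(-(K_{X'}+B'))^2\ge 0$, $-(K_{X'}+B')\cdot \ell_i\ge 0$ cannot by themselves bound the number of singular fibers, since all fibers are numerically proportional. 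The paper's mechanism is different: a fiber containing a singular point must be non-reduced (Lemma \ref{lem: fibers containing singular point is not Cartier}, via flatness and cohomology and base change), so in the (generalized) canonical bundle formula $0\sim_{\QQ}f^*(K_Z+B_Z+M_Z)$ the coefficient of $B_Z$ at such a point is $\geq\tfrac12$, and $\deg(K_Z+B_Z+M_Z)=0$ forces at most $4$ such fibers; the sharper ``at most $3$'' in the klt log Calabi--Yau case needs a further argument on $\mathbb F_m$ ruling out $B_Z=\tfrac12\sum_{i=1}^4 z_i$ (Lemma \ref{lem: number singlarity picard 2 case}). Likewise, both ``each contracted curve meets at most $2$ singular points'' and ``each fiber contains at most $2$ singular points'' are asserted but not proved; the paper proves them by passing to the minimal resolution, showing the strict transform is a $(-1)$-curve, contracting it, and deriving a contradiction with the tree property of dual graphs of resolutions (Lemma \ref{lem: certain blow up tree}, Lemmas \ref{lem: divisorial contraction reduce sing by 2} and \ref{lem: number singlarity picard 2 case}(1)). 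Without these three ingredients your constants $+2$ and $+4$ are not established.

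Parts (4), (6), (7) are also not carried out. For (6)(7) the essential input, absent from your sketch, is the connectedness principle: $(X,B)$ lc with $-(K_X+B)$ nef has at most $2$ points where $X$ is not klt, and exactly $1$ when $-K_X$ is big and nef (Shokurov--Koll\'ar type); one then extracts the lc places over those points to get a klt $Y$ with $1\le\rho(Y)-\rho(X)\le 2$ and $K_Y\not\equiv 0$, and applies (3). Your ``dlt modification plus adjunction along the non-klt locus'' does not by itself bound the number of non-klt centers, and your explanation of the improvement to $+7$ (``bigness excludes the conic bundle output'') is not the actual mechanism, which is that bigness forces a single non-klt point. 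For (4) the paper's argument is a one-line reduction to (3): extract a single divisor $E$ with log discrepancy $<1$, so $K_Y+aE\equiv 0$ with $a>0$, hence $K_Y\not\equiv 0$, giving $n(X)\le n(Y)+1\le 2\rho(X)+7$; your proposed ``reduction to the canonical situation'' is vaguer and does not yield the constant. In summary, the architecture matches the paper, but the key quantitative lemmas (non-reducedness of singular fibers plus the canonical bundle formula, the dual-graph tree arguments, and the connectedness bound on non-klt points) are missing rather than proved.
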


\begin{rem}
\begin{enumerate}
\item The assumption of Theorem \ref{thm:General Calabi Yau Case}(2) includes the case when $X$ is klt Fano or of Fano type, hence immediately implies Theorem \ref{thm: number sing surface is 2rho+2}.
\item Theorem \ref{thm: number sing surface is 2rho+2} may be well-known to experts, but we cannot find any references except \cite{Bel08,Bel09,KM99}, and we cannot find any similar results in papers citing \cite{Bel08}, \cite{Bel09}, or \cite{KM99}, so we believe that Theorem \ref{thm: number sing surface is 2rho+2} is new.
    \item The assumption ``$-(K_X+B)$ is nef" in Theorem \ref{thm:General Calabi Yau Case} cannot be further relaxed to ``$-K_X$ is pseudo-effective" even when $X$ is canonical and $-K_X$ is effective (Example-Proposition \ref{exm:2rho+2 is optimal for rho=2}(1)).
    \item The assumption ``$(X,B)$ is lc" Theorem \ref{thm:General Calabi Yau Case} cannot be further relaxed even when $\rho(X)=1$ and $X$ is Fano, otherwise $n(X)$ may be unbounded (Example \ref{ex: lc is necessary}(3)).
    \item The bounds for Theorem \ref{thm:General Calabi Yau Case}(2)(3) are optimal at least for low Picard numbers and the bounds for Theorem \ref{thm:General Calabi Yau Case}(5) is optimal. We don't know if the bounds for Theorem \ref{thm:General Calabi Yau Case}(4)(6) are optimal even for small values of $\rho(X)$, however $2\rho(X)+2$ is not satisfied even when $\rho(X)=1$ and $X$ is Fano (Example \ref{ex: lc is necessary}(2)). 
    \item We expect some boundedness results on singular points to hold in high dimensions (see Section 5). We prove the boundedness on the number of torus invariant singular points for toric varieties with bounded Picard numbers (Theorem \ref{thm: torid bdd number sing}), but one needs to be careful for non-toric varieties due to Example-Proposition \ref{exprop: ex threefold unbounded isolated singularities}.
\end{enumerate}
\end{rem}

\noindent\textbf{Acknowledgement}.  The authors would like to thank Christopher D. Hacon for useful discussions and suggestions. The authors would like to thank Paolo Cascini, Meng Chen, Jingjun Han, Chen Jiang, Yuchen Liu, and Qingyuan Xue for useful discussions. The authors were partially supported by NSF research grants no: DMS-1801851, DMS-1952522 and by a grant from the Simons Foundation; Award Number: 256202.

\section{Preliminaries}

We adopt the standard notation and definitions in \cite{KM98} and \cite{BCHM10}.

\subsection{Pairs and singularities}

\begin{defn}\label{defn: positivity}
	A \emph{pair} $(X,B)$ consists of a normal quasi-projective variety $X$ and an $\Rr$-divisor $B\ge0$ such that $K_X+B$ is $\Rr$-Cartier. If $B\in [0,1]$, then $B$ is called a \emph{boundary}.
	
	Let $E$ be a prime divisor on $X$ and $D$ an $\mathbb R$-divisor on $X$. We define $\mult_{E}D$ to be the \emph{multiplicity} of $E$ along $D$. Let $\phi:W\to X$
	be any log resolution of $(X,B)$ and let
	$$K_W+B_W:=\phi^{*}(K_X+B).$$
	The \emph{log discrepancy} of a prime divisor $D$ on $W$ with respect to $(X,B)$ is $1-\mult_{D}B_W$ and it is denoted by $a(D,X,B).$ We say that $(X,B)$ is lc (resp. klt) if $a(D,X,B)\geq 0$ (resp. $>0$) for every log resolution $\phi:W\to X$ as above and every prime divisor $D$ on $W$. 
	
	A germ $X\ni x$ consists of a normal quasi-projective variety $X$ and a closed point $x\in X$.
\end{defn}

\begin{defn}
    Let $f: X\dashrightarrow Y$ be a birational map which does not extract any divisor, $p: W\rightarrow X$ and $q: W\rightarrow Y$ a common resolution, and $D$ an $\Rr$-Cartier $\Rr$-divisor on $X$ such that $f_*D$ is $\Rr$-Cartier. We say that $f$ is \emph{$D$-negative} if
    $$p^*D=q^*D_Y+E$$
    for some $E\geq 0$, and $\Exc(f)\subset\Supp(p_*E)$.
\end{defn}

\begin{defn}\label{defn: fano type}
Let $X$ be a normal projective variety. We say that $X$ is \emph{Fano} if $-K_X$ is ample. We say that $X$ is \emph{of Fano type} if $(X,B)$ is klt and $-(K_X+B)$ is ample for some boundary $B$ on $X$.  We say that $(X,B)$ is log Calabi-Yau if $K_X+B\equiv 0$.
\end{defn}

\subsection{Surfaces}

\begin{defn}
A surface is a normal quasi-projective variety of dimension $2$. For any non-negative integer $m$, the Hirzebruch surface $\mathbb F_m$ is given by $\mathbb P_{\mathbb P^1}(\mathcal{O}_{\mathbb P^1}\oplus\mathcal{O}_{\mathbb P^1}(m))$.

 In some references, a klt Fano surface is also called a \emph{log del Pezzo} surface.
\end{defn}

\begin{defn}[Dual graph]\label{defn: crt}
Let $n$ be a non-negative integer, and $C=\cup_{i=1}^nC_i$ a collection of irreducible curves on a smooth surface $U$. We define the \emph{dual graph} $\Dd(C)$ of $C$ as follows.
\begin{enumerate}
    \item The vertices $v_i=v_i(C_i)$ of $\Dd(C)$ correspond to the curves $C_i$.
    \item For $i\neq j$,the vertices $v_i$ and $v_j$ are connected by $C_i\cdot C_j$ edges.
\end{enumerate}

For any birational morphism $f: Y\rightarrow X$ between surfaces, let $E=\cup_{i=1}^nE_i$ be the reduced exceptional divisor for some non-negative integer $n$. We define $\Dd(f):=\Dd(E)$. 

A dual graph is called a \emph{tree} if the graph contains no cycles. 
\end{defn}

\begin{lem}\label{lem: certain blow up tree}
\begin{enumerate}
    \item Let $f': Y'\rightarrow X\ni x$ be a resolution of a klt surface germ $X\ni x$. Then $\Dd(f')$ is a tree whose vertices are all smooth rational curves.
    \item Let $f': Y'\rightarrow X$ be a projective morphism between smooth surfaces. Then $\Dd(f')$ is a tree whose vertices are all smooth rational curves.

\end{enumerate}
\end{lem}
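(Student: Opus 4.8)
The plan is to prove (2) first, using the structure theory of birational morphisms of smooth surfaces, and then to deduce (1) by factoring an arbitrary resolution of the germ through its minimal resolution.

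For (2): recall that a proper birational morphism between smooth surfaces is a composition of blow-ups at closed points, so I would induct on the number of blow-ups, proving the slightly stronger statement that the reduced exceptional divisor $\bigcup_i E_i$ is a simple normal crossing divisor with at most two components through any point, and that $\Dd(f')$ is a disjoint union of trees of smooth rational curves. The base case of a single blow-up produces one $(-1)$-curve isomorphic to $\mathbb{P}^1$. For the inductive step, let the next blow-up be centered at a point $p$; by the inductive hypothesis $p$ lies on no current exceptional component, on exactly one component $C$, or on exactly two components $C_1,C_2$ meeting transversally at $p$. In the three cases the new exceptional curve (again $\cong\mathbb{P}^1$) becomes, respectively, a new connected component of the graph; a leaf joined by a single edge to the strict transform of $C$, which stays smooth rational; or a vertex joined by one edge each to the strict transforms of $C_1$ and $C_2$, which now become disjoint. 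Strict transforms of all other components are unaffected, no cycle is created, smoothness and rationality persist, and the simple normal crossing property (with at most two components through a point) persists, completing the induction.

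For (1): let $g\colon Y\to X\ni x$ be the minimal resolution; any resolution $f'\colon Y'\to X$ factors as $Y'\xrightarrow{h}Y\xrightarrow{g}X$ with $h$ a proper birational morphism of smooth surfaces. I would first treat $g$. The exceptional intersection matrix $(E_i\cdot E_j)$ is negative definite, so $E_i^2<0$ for all $i$; writing $K_Y=g^{*}K_X+\sum_i(a(E_i,X,0)-1)E_i$ and using that $K_Y$ is $g$-nef (by definition of the minimal resolution) together with negative-definiteness forces $a(E_i,X,0)\le 1$, while klt gives $a(E_i,X,0)>0$, so each $\delta_i:=1-a(E_i,X,0)$ lies in $[0,1)$. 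Adjunction then yields $2p_a(E_i)-2=E_i^2+K_Y\cdot E_i=(1-\delta_i)E_i^2-\sum_{j\neq i}\delta_j(E_i\cdot E_j)<0$, so $p_a(E_i)=0$, i.e.\ $E_i\cong\mathbb{P}^1$, and, since $K_Y\cdot E_i\ge 0$, also $E_i^2\le -2$. Suppose now $\Dd(g)$ contains a cycle; choose a shortest one, with support $Z=E_{i_1}+\dots+E_{i_k}$, so that $E_{i_\ell}\cdot Z=E_{i_\ell}^2+2\le 0$ for each $\ell$ and $Z^2+K_Y\cdot Z=0$ (the latter because each $E_{i_\ell}$ is rational and the cycle is chordless). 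Since $\bigl(K_Y+\sum_j\delta_jE_j\bigr)\cdot Z=(g^{*}K_X)\cdot Z=0$, we get $\bigl(Z-\sum_j\delta_jE_j\bigr)\cdot Z=0$; splitting $Z-\sum_j\delta_jE_j$ into its positive part (supported on $Z$, with coefficients $1-\delta_{i_\ell}>0$) and its negative part (supported off $Z$, with coefficients $\ge 0$), the first has non-positive and the second non-negative intersection with $Z$, so both vanish, which forces $E_{i_\ell}\cdot Z=0$ for every $\ell$. Hence $Z$ lies in the kernel of the intersection form restricted to the span of $E_{i_1},\dots,E_{i_k}$, contradicting negative-definiteness of that principal submatrix. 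Therefore $\Dd(g)$ has no cycle; being connected (as $X$ is normal) it is a tree of smooth rational curves, and since the $E_i$ are smooth the configuration $\bigcup_iE_i$ is moreover simple normal crossing with at most two components through a point. Finally, $h$ is a composition of blow-ups at closed points; only those centered at points lying over $x$ affect the part of the exceptional divisor of $f'=g\circ h$ over $x$, and applying the inductive step of (2) starting from $\bigcup_iE_i$ shows this part remains a tree of smooth rational curves. Hence $\Dd(f')$ is a tree of smooth rational curves. (Alternatively, one could invoke the classification of klt surface singularities as quotient singularities and their known resolution graphs.)

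The step I expect to be the main obstacle is ruling out cycles in $\Dd(g)$: this is exactly where the klt hypothesis must be played against negative-definiteness of the intersection form, via the vanishing $\bigl(K_Y+\sum_j\delta_jE_j\bigr)\cdot Z=0$ together with the bound $E_i^2\le -2$, and one must be sure the inequality $a(E_i,X,0)\le 1$ is in hand before running the adjunction computation. The remaining ingredients—that a proper birational morphism of smooth surfaces is a composition of point blow-ups, that the exceptional intersection matrix of a resolution of a normal surface germ is negative definite, and that every resolution of a surface germ dominates the minimal one—are standard and can be cited from \cite{KM98}.
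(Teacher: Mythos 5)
Your route is genuinely different from the paper's: the paper disposes of (1) in one line by citing \cite[Lemma 3.10]{HL20} together with the classification of klt surface singularities applied to the minimal resolution, and then deduces (2) from (1); you instead give a self-contained argument, proving (2) by induction on point blow-ups and (1) by playing the klt coefficients against negative definiteness on the minimal resolution. Most of this is correct: the factorization into blow-ups and the three-case induction for (2), the negativity-lemma bound $a(E_i,X,0)\le 1$, the adjunction computation giving $E_i\cong\mathbb{P}^1$ and $E_i^2\le -2$, and the observation that smoothness of the components plus absence of cycles (which excludes tangencies and triple points, since either would create a multi-edge or a triangle) yields the simple normal crossing statement you feed into the induction of (2).

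There is, however, a genuine gap in the cycle-exclusion step. Under the paper's definition of $\Dd$, two exceptional curves meeting in $m\ge 2$ points already form a cycle (a multi-edge), so your ``shortest cycle'' may be such a pair, and then your two key assertions fail when $m\ge 3$: one has $E_{i_\ell}\cdot Z=E_{i_\ell}^2+m$, which can be positive, and $Z^2+K_Y\cdot Z=2m-4>0$ rather than $0$, so the positive/negative splitting no longer forces $E_{i_\ell}\cdot Z=0$ and no contradiction is reached. This case is not excluded by negative definiteness together with $E_i^2\le -2$ alone: for instance $E_1^2=-2$, $E_2^2=-5$, $E_1\cdot E_2=3$ is a negative definite configuration, so the klt hypothesis must be invoked again. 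The repair is short: writing $a=-E_1^2\ge 2$, $b=-E_2^2\ge 2$ and using $(K_Y+\sum_j\delta_jE_j)\cdot E_1=(K_Y+\sum_j\delta_jE_j)\cdot E_2=0$ with $\delta_j\ge 0$ off the pair gives $(1-\delta_1)a\le 2-m\delta_2$ and $(1-\delta_2)b\le 2-m\delta_1$; adding these and splitting into the cases $a,b\le m$, exactly one of $a,b>m$, or both $>m$, each time using $\delta_i<1$ and the negative definiteness bound $ab>m^2$, rules out every $m\ge 2$. With that case added (or by falling back on the classification of klt surface singularities, as the paper does), your argument is complete.
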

\begin{proof}
 (1) follows from Lemma \cite[Lemma 3.10]{HL20} and the classification of klt surface singularities by taking $f: Y\rightarrow X$ to be the minimal resolution of $X\ni x$. (2) follows from (1) because $Y'$ is a resolution of $X$.
\end{proof}

\begin{lem}\label{lem: lc surface pair q gorenstein}
Let $(X,B)$ be an lc surface pair. Then $K_X$ is $\Qq$-Cartier.
\end{lem}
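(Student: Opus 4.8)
The plan is to localise at a singular point and then treat the klt and the strictly log canonical cases separately.

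\emph{Reduction.} Being $\mathbb{Q}$-Cartier is local, and a normal surface is smooth away from finitely many points, so we may fix a singular point $x\in X$, replace $X$ by a small affine neighbourhood of $x$, and show that $K_X$ is $\mathbb{Q}$-Cartier at $x$; the pair $(X,B)$ stays lc and every coefficient of $B$ is $\le 1$ (a coefficient $>1$ would produce a negative log discrepancy). One general remark will be used repeatedly: on a normal surface an $\mathbb{R}$-Cartier Weil divisor $D$ is automatically $\mathbb{Q}$-Cartier. Indeed, writing $D=\sum_i r_iH_i$ with $H_i$ Cartier and $r_i\in\mathbb{R}$, at $\mathcal{O}_{X,x}$ each $H_i$ is principal, so the class $[D]$ dies in $\mathrm{Cl}(\mathcal{O}_{X,x})\otimes_{\mathbb{Z}}\mathbb{R}$, hence $[D]$ is a torsion element of $\mathrm{Cl}(\mathcal{O}_{X,x})$ and $mD$ is Cartier for some $m>0$. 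So it suffices to show that $K_X$ is $\mathbb{R}$-Cartier near $x$, equivalently that $[K_X]$ is torsion in $\mathrm{Cl}(\mathcal{O}_{X,x})$.

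\emph{The klt case.} If $(X,B)$ is klt near $x$, then $X$ has rational singularities: this is the usual relative Kawamata--Viehweg vanishing argument on a log resolution and uses only that $K_X+B$ is $\mathbb{R}$-Cartier (compare \cite[Theorem 5.22]{KM98}). A two-dimensional rational singularity is $\mathbb{Q}$-factorial: on the minimal resolution $f\colon Y\to X$ the reduced exceptional divisor is a tree of smooth rational curves, so $\Pic(Y)$ is free of rank $n=\#\{E_i\}$, while the classes $[E_i]$ span a sublattice of full rank $n$ because the intersection matrix is negative definite, whence $\mathrm{Cl}(\mathcal{O}_{X,x})\cong\Pic(Y)/\langle[E_i]\rangle$ is finite. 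In particular $K_X$ is $\mathbb{Q}$-Cartier.

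\emph{The strictly log canonical case.} Suppose now that $(X,B)$ is lc but not klt near $x$. Let $f\colon Y\to X$ be the minimal resolution, $E_1,\dots,E_n$ its exceptional curves, $M=(E_i\cdot E_j)$ the negative definite intersection matrix, and write $f^*(K_X+B)=K_Y+f^{-1}_*B+\sum_i m_iE_i$. As $K_Y$ is $f$-nef (minimality) and $f^{-1}_*B$ is effective with $f^{-1}_*B\cdot E_j\ge 0$, the divisor $\sum_i m_iE_i$ is $f$-anti-nef, so the negativity lemma forces $m_i\ge 0$; log canonicity forces $m_i\le 1$. Feeding $0\le m_i\le 1$ into the relations $(K_Y+f^{-1}_*B+\sum_i m_iE_i)\cdot E_j=0$ together with adjunction on each $E_i$ pins $\Dd(f)$, with its self-intersections, into one of the log canonical shapes: either a tree of smooth rational curves (treated as in the klt case via the finite class group), or a graph containing exactly one curve of arithmetic genus $1$ or exactly one cycle of rational curves, so that $X\ni x$ is a simple elliptic, a cusp, or a quotient of such a singularity by a cyclic group of small order. (One may instead simply quote the classification of log canonical surface singularities, see e.g. \cite[\S4]{KM98}.) In every case $K_X$ is $\mathbb{Q}$-Cartier: on the genus-$1$ component, respectively on the cycle, adjunction forces the discrepancy coefficient to equal $1$, so the numerical pull-back of $K_X$ restricts there to the trivial dualizing sheaf, and, the rest of the exceptional locus being rational, a bounded multiple of that numerical pull-back descends to a Cartier divisor on $X$.

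\emph{Main obstacle.} The substance is the strictly log canonical case, together with a point that affects both cases: we are given only that $(X,B)$ is lc for \emph{some} boundary $B$, not that $(X,0)$ is log canonical --- a priori $K_X$ need not be $\mathbb{R}$-Cartier --- so one cannot just invoke ``log canonical surface singularities are $\mathbb{Q}$-Gorenstein''. One has to argue that the mere existence of an lc boundary already forces the exceptional configuration of the minimal resolution onto the $\mathbb{Q}$-Gorenstein list; for instance an exceptional component of arithmetic genus $\ge 2$, or a configuration with two independent loops, would make some $m_i>1$ and violate log canonicity.
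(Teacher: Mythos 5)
Your reduction (an integral Weil divisor that is $\Rr$-Cartier is $\Qq$-Cartier, so it suffices to work locally) and your klt case (klt $\Rightarrow$ rational $\Rightarrow$ $\Qq$-factorial in dimension two) are fine, and your route is genuinely different from the paper's: the paper never proves anything about the non-rational lc singularity types. It cites \cite[Proposition 4.11]{KM98} (numerically dlt surface germs are $\Qq$-factorial) and \cite[Corollary 4.2]{KM98}, which says that at a point where $(X,0)$ is not numerically dlt the numerically lc condition forces $x\notin\Supp B$; there $K_X$ coincides with the $\Rr$-Cartier divisor $K_X+B$, and one is done. This sidesteps exactly the question you attack head-on, namely whether the strictly lc singularities are $\Qq$-Gorenstein.

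That is where your argument has a genuine gap. In the simple elliptic and cusp cases your justification --- the coefficient on the genus-one curve (resp.\ the cycle) equals $1$, the restriction of the numerical pullback is the trivial dualizing sheaf, and ``a bounded multiple of that numerical pull-back descends to a Cartier divisor on $X$'' --- is not a proof, and the principle implicitly invoked (numerical triviality on the exceptional fibre implies descent up to a bounded multiple) is false for non-rational singularities: on the cone over an elliptic curve $E$, the local class group surjects onto $\mathrm{Pic}^0(E)$ modulo a finitely generated subgroup, and a Weil divisor whose class restricts to a non-torsion degree-zero line bundle on $E$ is numerically trivial on $E$ yet not $\Qq$-Cartier. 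So the $\Qq$-Gorenstein property of simple elliptic and cusp singularities (they are in fact Gorenstein, cf.\ Laufer's minimally elliptic singularities) is a real theorem that you must either cite or prove by computing $\omega_X$; you cannot get it from the dual graph plus numerical triviality alone. A parallel issue occurs in your ``tree'' subcase: the finiteness of the local class group was deduced from rationality, which you obtained from kltness, so for the strictly lc tree graphs you need a separate argument (e.g.\ Artin's criterion $p_a(Z)=0$ for the fundamental cycle of those specific graphs). Both gaps can be closed without any Gorenstein classification by following the paper's idea inside your own setup: the same intersection computation that pins down the graph (adjunction on the genus-one curve or on the cycle together with $0\le m_i\le 1$) also forces $f^{-1}_*B$ to be disjoint from the exceptional locus, i.e.\ $x\notin\Supp B$; then $K_X=K_X+B$ is $\Rr$-Cartier near $x$ and your torsion remark finishes the proof.
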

\begin{proof}
Pick any closed point $x\in X$. If $(X,0)$ is numerically dlt near $x$, then $X$ is $\Qq$-Cartier near $x$ by \cite[Proposition 4.11]{KM98}. If $(X,0)$ is not numerically dlt near $x$, since $(X,B)$ is lc, $(X,B)$ is numerically lc near $x$. By \cite[Corollary 4.2]{KM98}, $x\not\in B$, hence $K_X$ is $\Qq$-Cartier near $x$. Thus $K_X$ is $\Qq$-Cartier.
\end{proof}

\begin{lem}\label{lem: MMP get f1 surface}
Let $X$ be a smooth projective surface, $X\rightarrow Z\cong\mathbb P^1$ a fibration, and $X\rightarrow X'$ a $K_X$-MMP over $Z$ which terminates with a Mori fiber space $f: X'\rightarrow Z$. Assume that $X\not=X'\cong\mathbb P^1\times\mathbb P^1$. Then there exists a $K_X$-MMP over $Z$: $X\rightarrow X''$, such that $X''\cong\mathbb F_1$. 
\end{lem}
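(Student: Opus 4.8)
The plan is to exploit the freedom in choosing which extremal ray to contract at the \emph{last} step of the MMP. Since $Z\cong\mathbb P^1$ and we start from a smooth surface, every step of a $K_X$-MMP over $Z$ is the contraction of a $(-1)$-curve contained in a fibre of the structure map to $Z$, and the process stops exactly when the relative Picard number drops to $1$. Thus, as $X\neq X'$, the final step is a divisorial contraction $g\colon X_1\to X'\cong\mathbb F_0$ of a $(-1)$-curve $E_0$, where $X_1$ is a smooth projective surface with $\rho(X_1/Z)=2$, the composite $X\to\cdots\to X_1$ is already a $K_X$-MMP over $Z$, and $X_1\cong\mathrm{Bl}_p\mathbb F_0$ with $p:=g(E_0)$.

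Next I would produce an alternative last step. Let $F\subset\mathbb F_0$ be the fibre of $\mathbb F_0\to Z$ through $p$ and $\tilde F\subset X_1$ its strict transform. Then $\tilde F^2=-1$ and $K_{X_1}\cdot\tilde F=-1$, so $\tilde F$ is a $K_{X_1}$-negative $(-1)$-curve; it is contained in the fibre of $X_1\to Z$ over the image of $p$, and since $\rho(X_1/Z)=2$ the effective classes $[\tilde F]$ and $[E_0]$ are the two extremal rays of the two-dimensional cone $\overline{\NE}(X_1/Z)$ (they are distinct because $[\tilde F]+[E_0]$ is the general fibre class and neither is a negative multiple of the other). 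Hence the contraction $h\colon X_1\to X''$ of $\tilde F$ is a genuine $K_{X_1}$-negative divisorial contraction over $Z$, and $X''$ is a smooth surface with $\rho(X''/Z)=1$, i.e. $X''\to Z$ is a Mori fibre space, so $X''\cong\mathbb F_m$ for some $m\geq0$.

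Finally I would pin down $m$. Choose a section $G\subset\mathbb F_0$ of $\mathbb F_0\to Z$ through $p$ with $G^2=0$, let $\tilde G\subset X_1$ be its strict transform and $\bar G\subset X''$ its image. Since $G$ and $F$ meet transversally at $p$, the curves $\tilde G$ and $\tilde F$ are disjoint on $X_1$, so $\bar G^2=\tilde G^2=-1$; thus $X''\cong\mathbb F_m$ carries a section of self-intersection $-1$. As no irreducible curve on $\mathbb P^1\times\mathbb P^1$ has negative self-intersection, while for $m\geq1$ the unique negative section of $\mathbb F_m$ has self-intersection $-m$, we get $m=1$, i.e. $X''\cong\mathbb F_1$. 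Concatenating, $X\to\cdots\to X_1\xrightarrow{h}X''\cong\mathbb F_1$ is the desired $K_X$-MMP over $Z$. The only points needing care, neither of them serious, are checking that $h$ really is an MMP step over $Z$ (hence verifying both $K_{X_1}\cdot\tilde F<0$ and that $[\tilde F]$ spans an extremal ray of $\overline{\NE}(X_1/Z)$) and the self-intersection bookkeeping under the blow-up $X_1\to\mathbb F_0$ and blow-down $X_1\to X''$.
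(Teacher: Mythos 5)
Your argument is correct and is essentially the paper's proof: both replace the last step (the blow-down $X_1\to X'\cong\mathbb P^1\times\mathbb P^1$ of a point $p$) by the contraction of the strict transform of the fiber through $p$, i.e.\ an elementary transformation, which yields $\mathbb F_1$. You simply spell out the details the paper leaves implicit (that this is a genuine $K$-negative extremal contraction over $Z$, and the identification of the target via the resulting $(-1)$-section).
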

\begin{proof}
Let $g: Y\rightarrow X'$ be the last step of the MMP $X\rightarrow X'$. Then $g$ is a blow-up of a closed point $x'\in X'$. Let $F:=g^{-1}_*(f^{-1}(f(x')))$, then $F^2=-1$. Let $g': Y\rightarrow X''$ be the contraction of $F$, then $X''\cong\mathbb F_1$ and the induced morphism $X\rightarrow X''$ is a $K_X$-MMP.
\end{proof}

\subsection{G-pairs}

We need the following definitions on generalized pairs (g-pairs for short). See \cite{BZ16} for more details.

\begin{defn}[$\bb$-divisors]\label{defn: b divisors} Let $X$ be a normal quasi-projective variety. We call $Y$ a \emph{birational model} over $X$ if there exists a projective birational morphism $Y\to X$. 

Let $X\dashrightarrow X'$ be a birational map. For any valuation $\nu$ over $X$, we define $\nu_{X'}$ to be the center of $\nu$ on $X'$. A \emph{$\bb$-divisor} $\Mm$ over $X$ is a formal sum $\Mm=\sum_{\nu} r_{\nu}\nu$ where $\nu$ are valuations over $X$, such that $\nu_X$ is not a divisor except for finitely many $\nu$. If in addition, $r_{\nu}\in\Qq$ for every $\nu$, then $\Mm$ is called a \emph{$\Qq$-$\bb$-divisor}. The \emph{trace} of $\Mm$ on $X'$ is the $\Rr$-divisor
$$\Mm_{X'}:=\sum_{\nu_{i,X'}\text{ is a divisor}}r_i\nu_{i,X'}.$$
If $\Mm_{X'}$ is $\Rr$-Cartier and $\Mm_{Y}$ is the pullback of $\Mm_{X'}$ on $Y$ for any birational model $Y$ of $X'$, we say that $\Mm$ \emph{descends} to $X'$, and write $\Mm=\overline{\Mm_{X'}}$. If $X$ is projective and $\Mm$ is a $\bb$-divisor over $X$, such that $\Mm$ descends to some birational model $Y$ over $X$ and $\Mm_Y$ is nef, then we say that $\Mm$ is \emph{nef}.
\end{defn}

\begin{defn}[G-pairs]\label{defn: gpairs}
	A \emph{projective g-pair} $(X,B,\Mm)$ consists of a normal projective variety $X$, an $\Rr$-divisor $B\geq 0$ on $X$, and a nef $\bb$-divisor $\Mm$ over $X$, such that $K_X+B+\Mm_X$ is $\Rr$-Cartier. If $B$ is a $\Qq$-divisor and $\Mm$ is a $\Qq$-$\bb$-divisor, then we say that $(X,B,\Mm)$ is a $\Qq$-g-pair.
	
	Let $(X,B,\Mm)$ be a projective g-pair, $\phi:W\to X$ any log resolution of $(X,\Supp B)$ such that $\Mm$ descends to $W$, and
	$$K_W+B_W+\Mm_W:=\phi^{*}(K_X+B+\Mm_X).$$
	We say that $(X,B,\Mm)$ is \emph{glc} if the coefficients of $B_W$ are $\leq 1$.
	
	For any projective glc g-pair $(X,B,\Mm)$ and $\Rr$-Cartier $\Rr$-divisor $D\geq 0$ on $X$, we define
	$$\glct(X,B,\Mm;D):=\sup\{t\mid (X,B+tD;\Mm)\text{ is glc}\}$$
to be the \emph{glc threshold} of $D$ with respect to $(X,B,\Mm)$.
\end{defn}

\section{Proofs of the main theorems}

\begin{lem}\label{lem: antinef imply antinef q}
Let $(X,B)$ be an lc pair such that $(X,B)$ is lc and $-(K_X+B)$ is nef (resp. $K_X+B\equiv 0$). Then there exists a $\Qq$-divisor $B'$ on $X$ such that $(X,B')$ is lc and $-(K_X+B')$ is nef (resp. $K_X+B\equiv 0$).
\end{lem}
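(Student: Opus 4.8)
The statement asserts that an arbitrary lc pair $(X,B)$ with $-(K_X+B)$ nef (or $\equiv 0$) can be replaced by an lc $\Qq$-pair $(X,B')$ with the same positivity on $-(K_X+B')$. The plan is to perturb the coefficients of $B$ to rational numbers. Write $B=\sum_{i=1}^k b_i B_i$ with $B_i$ prime. The key point is that lc-ness and the relation ``$-(K_X+B)$ nef'' are both described by a finite system of \emph{rational linear} inequalities in the coefficients $(b_1,\dots,b_k)$, so the rational points are dense in the (nonempty, since $(b_1,\dots,b_k)$ lies there) set cut out by this system.

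First I would fix a log resolution $\phi\colon W\to X$ of $(X,\Supp B)$; by Lemma \ref{lem: lc surface pair q gorenstein} (in the surface case; in general one uses that $K_X$ is $\Qq$-Cartier wherever needed, but here we may simply note that $K_X$ being $\Qq$-Cartier is not required a priori — what we need is $K_X+B$ $\Rr$-Cartier) one has a well-defined pullback. Concretely, writing $\tilde B_i$ for the strict transform and $E_j$ for the $\phi$-exceptional primes, the coefficients of $B_W = \sum c_i(\bm b)\,\tilde B_i + \sum d_j(\bm b)\,E_j$ are affine-linear functions of $\bm b=(b_1,\dots,b_k)$ with rational coefficients, because $K_W+B_W=\phi^*(K_X+B)$ and the intersection-theoretic conditions determining the $d_j$ form a rational linear system. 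The lc condition is then the finite set of rational inequalities $c_i(\bm b)\le 1$, $d_j(\bm b)\le 1$ together with $0\le b_i\le 1$. Next, the condition that $-(K_X+B)$ is nef: fix generators (or, over a surface, use the cone theorem to reduce to finitely many extremal curves) $\ell_1,\dots,\ell_m$ of $\overline{\NE}(X)$; then ``$-(K_X+B)$ nef'' is equivalent to the finitely many conditions $-(K_X+B)\cdot \ell_t\ge 0$, each again affine-linear in $\bm b$ with rational coefficients (using that $K_X$ and each $B_i$ define classes with rational intersection numbers against $\ell_t$). In the Calabi–Yau case $K_X+B\equiv 0$, we instead impose the rational linear \emph{equations} $(K_X+B)\cdot \ell_t=0$ for all $t$, and also $(K_X+B)\cdot B_i = 0$; since numerical triviality over the $\Rr$-vector space $N^1(X)_\Rr$ is cut out by rational equations, a rational solution near $\bm b$ still lies on the same rational affine subspace, hence is still $\equiv 0$.

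So the whole argument reduces to: the set $P\subset \Rr^k$ of coefficient vectors satisfying the above rational linear (in)equalities is a rational polyhedron, it is nonempty because $\bm b\in P$, hence $P\cap\Qq^k$ is dense in $P$; pick any $\bm b'\in P\cap\Qq^k$ (in the Calabi–Yau case, any rational point of the rational affine subspace defined by the equalities, which is nonempty as it contains $\bm b$) and set $B'=\sum b'_i B_i$. Then $(X,B')$ is lc, $B'$ is a $\Qq$-divisor, $K_X+B'$ is $\Qq$-Cartier (being a $\Qq$-linear combination of the Cartier-index-controlled data, or simply because its pullback to $W$ has rational coefficients over a log resolution), and $-(K_X+B')$ is nef (resp. $K_X+B'\equiv 0$), as desired.

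The main obstacle, such as it is, is bookkeeping rather than conceptual: one must check that \emph{all} the relevant conditions are genuinely cut out by \emph{rational} linear data. For lc-ness this is the standard observation that discrepancies depend affine-linearly on boundary coefficients once a log resolution is fixed; for nefness one needs $\overline{\NE}(X)$ to be governed by finitely many curve classes with rational intersection numbers against the $B_i$ — over a surface this is immediate from the cone theorem applied on a minimal resolution (or directly, since we only need finitely many $\ell_t$ to test nefness of a fixed-direction family), and in any case the lemma is only invoked for surfaces in this paper. I would also remark that if one wants $(X,B')$ \emph{klt} when $(X,B)$ is klt, the same argument works: replace the non-strict inequalities $c_i\le 1$, $d_j\le 1$ by strict ones, and note $\bm b$ lies in the resulting open rational polyhedron, so rational points remain dense.
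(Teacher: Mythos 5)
The nefness step is where your argument breaks down. You reduce everything to ``a finite system of rational linear inequalities'' by fixing generators $\ell_1,\dots,\ell_m$ of $\overline{\NE}(X)$, but $\overline{\NE}(X)$ is in general not a finitely generated (nor rational polyhedral) cone, even for smooth projective surfaces: the blow-up of $\Pp^2$ at nine or more very general points has infinitely many extremal rays spanned by $(-1)$-curves, and a very general K3 or abelian surface has a round cone of curves. The cone theorem does not repair this: it controls only the $(K_X+\Delta)$-negative extremal rays (discreteness, bounded length), whereas the curves that threaten anti-nefness after perturbation are exactly those with $(K_X+B)\cdot C=0$, which become $(K_X+B')$-positive; these lie in the $(K_X+B)$-nonnegative part of the cone, about which the cone theorem says nothing, and there may be infinitely many of them. (There is also the accumulation problem: curves with $(K_X+B)\cdot C<0$ but values tending to $0$ can obstruct every fixed-direction perturbation $B_t=B+t(B'-B)$, no matter how small $t$ is.) So the assertion that ``$-(K_X+B')$ nef'' is cut out by finitely many rational linear conditions in the coefficients is precisely the nontrivial content of the lemma, not an observation; your parenthetical remarks (``use the cone theorem'', ``we only need finitely many $\ell_t$ to test nefness of a fixed-direction family'') are hand-waving at the one point where a real argument is needed. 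The lc-ness and numerical-triviality parts are sound in spirit for surfaces (Lemma \ref{lem: lc surface pair q gorenstein} gives $K_X$ $\Qq$-Cartier, so pullback coefficients are rational-affine in the $b_i$, and $K_X+B'\equiv 0$ is tested against a basis of $N_1(X)_{\Qq}$), though in higher dimensions even the statement that the $\Rr$-Cartier condition cuts out a rational affine subspace requires proof.

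For comparison, the paper does not prove the lemma directly either: it cites \cite[Proposition 2.6]{HL20a}, \cite[Corollary 3.5]{HL19}, and \cite[Lemma 5.4, Theorem 5.6]{HLS19}, where the anti-nef (and numerically trivial) statements are established by Shokurov-type rational polytope arguments resting on boundedness of lengths of extremal rays and on the rationality of the $\Rr$-Cartier locus. Your reduction ``rational points are dense in a nonempty rational polyhedron'' is the correct endgame, but only after one knows the relevant set is a rational polyhedron --- which is the theorem being cited, and which your proposal does not supply.
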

\begin{proof}
Cf. \cite[Proposition 2.6]{HL20a}, \cite[Corollary 3.5]{HL19}, and \cite[Lemma 5.4,Theorem 5.6]{HLS19}.
\end{proof}

\begin{lem}\label{lem: divisorial contraction reduce sing by 2}
Let $X$ be a klt surface, $f: X\rightarrow Y$ a $K_X$-negative divisorial contraction of a curve $C$, and $y:=f(C)$. Then $C$ contains at most $2$ singular points of $X$.
\end{lem}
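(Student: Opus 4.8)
The plan is to analyze the divisorial contraction $f\colon X\to Y$ by passing to minimal resolutions and using the tree structure from Lemma \ref{lem: certain blow up tree}. First I would take the minimal resolution $g\colon \widetilde X\to X$ and let $h\colon \widetilde X\to Y$ be the induced birational morphism; since $f$ contracts a single curve $C$ to the point $y$, the composite $h$ is a resolution of the germ $Y\ni y$, so by Lemma \ref{lem: certain blow up tree}(1) the dual graph $\Dd(h)$ is a tree of smooth rational curves, and $\Dd(g)$ (the exceptional locus over the singular points of $X$ lying on $C$) sits inside it as a disjoint union of sub-chains/subtrees. The strict transform $\widetilde C$ of $C$ together with the $g$-exceptional curves over the points of $C\cap\Sing X$ forms a connected subtree of $\Dd(h)$, with $\widetilde C$ as a distinguished vertex.

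Next I would use the negativity/contractibility constraints. Because $f$ is a $K_X$-negative divisorial contraction of the curve $C$ on a surface, $C\cong\mathbb P^1$, $C^2<0$, and $-K_X\cdot C>0$; moreover $\rho(X)=\rho(Y)+1$. The key numerical input is that each singular point $p_i\in C\cap\Sing X$ contributes, via the minimal resolution, a connected bunch of $(\le -2)$-curves attached to $\widetilde C$, and the self-intersection of $\widetilde C$ in $\widetilde X$ is $C^2$ minus the contributions from resolving these points. Since $h$ contracts the connected tree consisting of $\widetilde C$ and all these exceptional curves down to the single klt (in particular rational) singularity $Y\ni y$, and since $f$ is $K_X$-negative, I expect a bound of the form: the discrepancy/contractibility forces $\widetilde C$ to meet at most two "branches" of $(\le-2)$-curves nontrivially, i.e. $C$ passes through at most $2$ singular points of $X$. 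Concretely, one shows that if $C$ contained $3$ or more singular points, then $\widetilde C$ would be a vertex of valence $\ge 3$ in a tree all of whose other relevant vertices have self-intersection $\le -2$, and a standard contraction-criterion computation (the intersection matrix of the contracted tree must be negative definite, and $Y$ klt bounds the configuration) would be violated — or would force $-K_X\cdot C\le 0$, contradicting $K_X$-negativity.

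The main obstacle will be making the last step precise: ruling out $\widetilde C$ having three or more $(\le -2)$-curve branches while $h$ still contracts to a klt singularity and $f$ remains $K_X$-negative. I would handle this by writing $K_{\widetilde X}+\sum a_j E_j = h^*K_Y$ with all $a_j\le 0$ (rationality of $Y$), extracting the coefficient equation along $\widetilde C$, and combining it with $K_X\cdot C<0$ pulled back to $\widetilde X$; the adjunction/discrepancy computation on the star-shaped subgraph then yields the numerical contradiction when there are $\ge 3$ branches. Alternatively — and this may be cleaner — one invokes the explicit classification of klt surface singularities: the dual graph of a klt singularity is either a chain ($A_n$-type / cyclic quotient) or star-shaped with a single central vertex and exactly three chains ($D$- and $E$-type), and in the latter case the central vertex already has self-intersection $\le -2$, so it cannot be $\widetilde C$ (which must satisfy $-K_X\cdot C>0$, forcing the discrepancy condition that prevents it from being an interior $(\le-2)$-vertex with three branches of exceptional curves over $\Sing X$). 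Either way, the outcome is that the exceptional curves over $\Sing X\cap C$ attach to $\widetilde C$ along at most two of its "free" directions, giving $n$-many singular points on $C$ with $n\le 2$.
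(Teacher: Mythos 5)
Your setup (minimal resolution $g\colon W\to X$, the composite $W\to Y$ viewed as a resolution of the klt germ $Y\ni y$, the tree property of Lemma \ref{lem: certain blow up tree}, and the observation that $K_X$-negativity forces the discrepancy of $C$ over $Y$ to be positive, i.e. $a(C,Y,0)>1$) matches the paper's, but the decisive step is missing. A star-shaped tree with central vertex $\widetilde C$ and three branches of $(\le -2)$-curves is a perfectly good tree, so the tree property alone gives nothing; and, contrary to your sketch, negative definiteness is not violated either: the star consisting of a central $(-1)$-curve meeting three $(-4)$-curves is negative definite (it contracts to a normal but non-lc point), so no ``standard contraction-criterion computation'' can produce the contradiction --- the klt hypothesis must enter. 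Your appeal to the classification of klt surface singularities only constrains the \emph{minimal} resolution of $Y\ni y$, whereas $W\to Y$ need not be minimal: precisely when $\widetilde C^2=-1$, which is the case your argument leaves open, the classification says nothing about $\Dd(W\to Y)$. Your route via $K_W=h^*K_Y+\sum d_jE_j$ also starts from a wrong sign (on the minimal resolution of a klt germ the discrepancies are $\le 0$, not $\ge 0$, and on the possibly non-minimal $W$ the signs are not controlled a priori), and the ``numerical contradiction for $\ge 3$ branches'' is asserted rather than carried out.

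The paper closes exactly this case by a short dichotomy plus a contraction trick. If $\widetilde C^2\le -2$, then $W\to Y$ is the minimal resolution of $Y\ni y$, which is impossible because $a(C,Y,0)>1$ while every exceptional divisor on the minimal resolution of a klt surface germ has log discrepancy $\le 1$; hence $\widetilde C^2=-1$ (and $\widetilde C\cong\mathbb P^1$ by the tree lemma). Contracting $\widetilde C$ gives another resolution $T\to Y$ of the same germ in which the three exceptional curves that met $\widetilde C$ now meet pairwise, so $\Dd(T\to Y)$ contains a triangle, contradicting Lemma \ref{lem: certain blow up tree}(1). Without this contraction step (or a complete discrepancy computation for star-shaped graphs with a central $(-1)$-curve, which you have not supplied), your proposal does not rule out three or more singular points on $C$.
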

\begin{proof}
We may assume that $C$ contains $n$ singular points of $X$ for some integer $n\geq 3$. Let $g: W\rightarrow X$ be the minimal resolution of $X$ near $C$ with exceptional divisors $E_1,\dots,E_m$ for some integer $m\geq n$. Let $C_W:=g^{-1}_*C$. Possibly reordering indices, we may assume that $C_W$ intersects $E_1,E_2$ and $E_3$.

Since $f\circ g$ is a resolution of $Y\ni y$, by Lemma \ref{lem: certain blow up tree}(1), $C_W\cong\mathbb P^1$. If $C_W\le-2$ then $f\circ g$ is actually the minimal resolution of $Y\ni y$. But $a(C,Y,0)>1$ since $f$ is $K_X$-negative, thus $C$ is not contained in the minimal resolution of $Y\ni y$. Hence $C_W^2=-1$ and we may let $p: W\rightarrow T$ be the contraction of $C_W$. Then there exists an induced morphism $h: T\rightarrow Y$ which is a resolution of $Y\ni y$. Let $E_{i,T}:=p_*E_i$ for each $i$, then $E_{i,T}\cdot E_{j,T}\ge1$ for every $i,j\in\{1,2,3\}$ with $i\not=j$. Thus $\Dd(\cup_{i=1}^mE_{i,T})=\Dd(h)$ is not a tree, which contradicts Lemma \ref{lem: certain blow up tree}(1).

%Since $f$ is $K_X$-negative, $a(C,Y,0)>1$, thus $C$ is not contained in the minimal resolution of $Y\ni y$. In particular, $f\circ g$ is not the minimal resolution of $Y\ni y$, but is a resolution of $Y\ni y$. Thus by Lemma \ref{lem: certain blow up tree}(1), $C_W\cong\mathbb P^1$ and . Since $C$, $C_W^2=-1$, 

\end{proof}

\begin{lem}\label{lem: fibers containing singular point is not Cartier}
Let $X$ be a klt surface, $f: X\to Z$ a $K_X$-Mori fiber space such that $\dim Z=1$, and $z\in Z$ a closed point. If  $f^*z$ is reduced, then $X$ is smooth near $f^{-1}z$. 
\end{lem}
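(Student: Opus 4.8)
The plan is to analyze the local structure of the fiber $f^{-1}z$ and use the fact that a Mori fiber space from a klt surface has very restricted singularities along its fibers. Let me set up the situation: $f\colon X\to Z$ is a $K_X$-Mori fiber space with $\dim Z=1$, so $\rho(X/Z)=1$ and $-K_X$ is $f$-ample, and the generic fiber is $\mathbb{P}^1$. Fix the closed point $z\in Z$ and write $F=f^*z$ for the scheme-theoretic fiber, which we assume is reduced. The claim is that $X$ is smooth in a neighborhood of $F_{\mathrm{red}}=F$.

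\textbf{Approach.} First I would pass to the minimal resolution $g\colon W\to X$ and study the morphism $f\circ g\colon W\to Z$. Let $C_1,\dots,C_k$ be the components of $F$ (each with coefficient $1$), let $\widetilde{C}_i=g^{-1}_*C_i$ be their strict transforms, and let $E$ be the total exceptional divisor of $g$ lying over the singular points of $X$ on $F$. Then the fiber of $W\to Z$ over $z$ is $\sum a_i\widetilde{C}_i+\sum b_j E_j$ for suitable positive integers $a_i, b_j$, and this fiber is a connected curve of arithmetic genus $0$ whose dual graph is a tree of smooth rational curves (by Lemma \ref{lem: certain blow up tree} applied to the exceptional part, together with the fact that a degenerate fiber of a $\mathbb{P}^1$-fibration from a smooth surface is a tree of $\mathbb{P}^1$'s obtained by successive blow-ups). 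The key numerical input is: since $F=f^*z$ is reduced and $Z$ is a curve, $F$ has coefficient $1$ along each $C_i$; equivalently, pulling back the Cartier divisor $z$, we get $(f\circ g)^*z=\sum_i \widetilde{C}_i+\sum_j c_j E_j$ where the $c_j$ are forced by the requirement that this divisor has intersection number $0$ with every exceptional curve $E_j$.

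\textbf{Main step.} Now I would run the standard argument contracting $(-1)$-curves in the fiber of $W\to Z$, i.e. take a $K_W$-MMP over $Z$. At each stage the fiber stays a tree of rational curves and the total fiber is $(f\circ g)^*z$; the process terminates with a relatively minimal model, which since the generic fiber is $\mathbb{P}^1$ must be $\mathbb{P}^1$-bundle over a neighborhood of $z$, hence with \emph{reduced irreducible} fiber over $z$. Tracking which curves get contracted: the curves $E_j$ (exceptional over $\mathrm{Sing}(X)$) have $c_j\geq 1$; but in a reduced fiber, going backwards through blow-ups, the first blow-up over $z$ produces a fiber $2e+$ (two components), and every further blow-up at a point where the current fiber has multiplicity $m$ produces a new component of multiplicity at most... — actually the crucial point is simpler: since $(f\circ g)^*z$ has all the $\widetilde{C}_i$ appearing with coefficient exactly $1$, and the whole configuration is obtained from a single reduced fiber by blow-ups, any blow-up centered at a smooth point of the current fiber keeps coefficients controlled, while blow-ups at nodes raise some coefficient to $\geq 2$. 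The contracted $E_j$'s are precisely the exceptional divisors of $g$, so I need to show $g$ is an isomorphism over $F$, i.e. there are no $E_j$'s. Suppose some $E_j$ exists with minimal coefficient $c_j$ in the fiber; since it contracts to a singular point of $X$ (not just a smooth point of the MFS target), and $-K_X$ is $f$-ample, one derives via adjunction on $W$ that $K_W\cdot(\text{fiber})=-2$ forces the configuration to be exactly $\sum\widetilde{C}_i$ with no exceptional curves — more precisely, $-2 = K_W\cdot (f\circ g)^*z = \sum K_W\cdot\widetilde{C}_i + \sum c_j K_W\cdot E_j$, and since each $K_W\cdot E_j\geq 0$ (minimal resolution, $E_j^2\leq -2$, $E_j\cong\mathbb P^1$ gives $K_W\cdot E_j = E_j^2+2\cdot(\text{something})\geq 0$ only when... ) — I'd combine $K_W\cdot E_j\ge 0$ with the tree structure and the $\mathbb P^1$-fibration constraint to conclude no $E_j$ can appear, hence $g$ is an isomorphism near $F$ and $X$ is smooth there.

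\textbf{Expected main obstacle.} The delicate part is the bookkeeping of multiplicities $c_j$ in $(f\circ g)^*z$ and showing rigorously that the existence of an exceptional curve $E_j$ over a singular point of $X$ is incompatible with the fiber being a degeneration of a reduced $\mathbb{P}^1$-fiber — in other words, ruling out the "Hirzebruch-like" configurations where blow-ups happen only at smooth points of the fiber and never create a singularity of $X$ on the relevant component. I expect this to reduce to the observation that if a curve $C_W$ in the fiber of $W\to Z$ is a strict transform of a component of the reduced fiber $F$ and meets the exceptional locus of $g$, then contracting the fiber to the relatively minimal $\mathbb{P}^1$-bundle forces $C_W$ to have been blown up at a point lying on $C_W$, raising the local multiplicity of the pullback above $1$ along a component of $F$ — contradicting reducedness of $f^*z$. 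Making this last incompatibility precise, perhaps by a direct induction on the number of blow-ups of $g$ combined with Lemma \ref{lem: certain blow up tree}, is where the real work lies; the rest is formal.
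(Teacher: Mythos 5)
Your route (minimal resolution plus analysis of the degenerate fiber of the induced $\mathbb{P}^1$-fibration) is genuinely different from the paper's, which is much shorter and avoids birational geometry altogether: since $X$ is Cohen--Macaulay and $Z$ is a smooth curve, $f$ is flat; $R^1f_*\mathcal{O}_X=0$ together with reducedness of $f^*z$ gives, via cohomology and base change, $H^1(X_z,\mathcal{O}_{X_z})=0$, hence $X_z\cong\mathbb{P}^1$; then flatness with regular base and regular fiber yields regularity of $X$ along $f^{-1}z$ (Matsumura, Theorem 23.7). Your approach can be made to work, but as submitted it has a genuine gap: the decisive contradiction is never derived --- you explicitly defer ``making this last incompatibility precise'' to future work, and the two mechanisms you gesture at do not suffice as stated. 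The multiplicity bookkeeping (``blow-ups at nodes raise some coefficient to $\geq 2$'') does not by itself exclude $g$-exceptional curves, because an exceptional curve of the minimal resolution can occur with coefficient $1$ in $(f\circ g)^*z$: blow up a smooth point of a multiplicity-one fiber component and then blow up a point of the new curve away from the old one, and the first exceptional curve becomes a $(-2)$-curve of coefficient $1$. The adjunction computation is likewise left unfinished (the correct statement is simply $K_W\cdot E_j=-2-E_j^2\ge 0$, since each $E_j$ is a smooth rational curve with $E_j^2\le -2$ on the minimal resolution), and the conclusion ``no $E_j$ can appear'' is asserted rather than proved.

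Here is how to close the gap along your own lines. First use that a fiber of a Mori fiber space over a curve is irreducible, so $f^{-1}z=F$ is a single curve (you write $C_1,\dots,C_k$, but you need $k=1$: with several reduced components the numerics below give no contradiction). Then the fiber of $W\to Z$ over $z$ is $D=(f\circ g)^*z=\widetilde F+\sum_j c_jE_j$, where the $E_j$ are the $g$-exceptional curves over singular points of $X$ on $F$, each $c_j>0$ by negative definiteness of the exceptional locus, and the coefficient of $\widetilde F$ is exactly $1$ precisely because $f^*z$ is reduced. Since $D$ is numerically equivalent to a general fiber, $K_W\cdot D=-2$; as $K_W\cdot E_j\ge 0$ for all $j$, this forces $K_W\cdot\widetilde F\le -2$, i.e. $\widetilde F^2\ge 2p_a(\widetilde F)\ge 0$. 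But if some $E_j$ exists, then $D$ is a reducible fiber, so every component of $D$ --- in particular $\widetilde F$ --- has negative self-intersection by Zariski's lemma, a contradiction. Hence $g$ is an isomorphism over $F$ and $X$ is smooth near $f^{-1}z$. Note that this is exactly where reducedness enters: if $\widetilde F$ had coefficient $a\ge 2$ one would only get $\widetilde F^2\ge -1$, consistent with $\widetilde F$ being a $(-1)$-curve, and indeed non-reduced fibers through singular points do occur.
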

\begin{proof}
Since $f:X\to Z$ is a $K_X$-Mori fiber space, $f^{-1}z$ is an irreducible curve and $R^if_*\Oo_X=0$ for any $i>0$. Since $Z$ is regular and $X$ is Cohen-Macaulay, $f$ is flat \cite[Theorem 23.1]{Mat89}. 
If $f^*z$ is reduced, then by Cohomology and Base change \cite[III 12.11]{Har77}, $H^1(X_z,\Oo_{X_z})=0$ so $X_z\cong \Pp^1$. Combining with the fact that $f$ is flat, we deduce that $X$ is regular along $f^{-1}z$ because both $X_z=f^{-1}z$ and $Z$ are regular \cite[Theorem.23.7]{Mat89}.
\end{proof}

\begin{lem}\label{lem: number singlarity picard 2 case}
Let $(X,B)$ be an lc projective surface pair such that $-(K_X+B)$ is nef, and $f: X\rightarrow Z$ a $K_X$-Mori fiber space such that $\dim Z=1$. Then
\begin{enumerate}
    \item any fiber of $f$ contains at most $2$ singular points of $X$,
    \item 
    \begin{enumerate}
        \item at most four fibers of $f$ contain singular point(s) of $X$, and
        \item if $(X,B)$ is klt and $K_X+B\equiv 0$, then at most three fibers of $f$ contain singular point(s) of $X$.
    \end{enumerate}
\end{enumerate}
\end{lem}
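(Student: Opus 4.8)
The plan is to study the fibration $f\colon X\to Z\cong\mathbb P^1$ fiberwise and globally. For part (1), I would take a general enough setup: a singular point $x$ of $X$ lying on a fiber $F=f^{-1}z$ forces $F$ to be singular or non-reduced there. Run a $K_X$-negative MMP over $Z$ on the minimal resolution, or alternatively note that since $f$ is a Mori fiber space with $\dim Z=1$, each fiber $F$ is irreducible; if $F$ carried three singular points of $X$, resolving minimally over a neighbourhood of $F$ and contracting $F$-strict-transform would produce a non-tree dual graph on a resolution of the germ $Z\ni z$ after pushing forward, contradicting Lemma \ref{lem: certain blow up tree}(2) exactly as in the proof of Lemma \ref{lem: divisorial contraction reduce sing by 2}. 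So the argument for (1) should be a near-verbatim adaptation of Lemma \ref{lem: divisorial contraction reduce sing by 2}, with the divisorial contraction replaced by the Mori fiber contraction and the key input still being that fibers over a smooth curve give tree dual graphs.

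For part (2a), I would first reduce to the $\mathbb Q$-boundary case via Lemma \ref{lem: antinef imply antinef q}, then pass to the minimal resolution $g\colon Y\to X$ and run a $K_Y$-MMP over $Z$, which terminates with a Mori fiber space $Y'\to Z$ with $Y'$ a smooth rational surface, either $\mathbb P^1\times\mathbb P^1$ or (via Lemma \ref{lem: MMP get f1 surface}) reachable through $\mathbb F_1$ and hence through some $\mathbb F_m$. The point is that each singular point of $X$ is ``created'' in the MMP from $Y$ down to $X$, i.e.\ corresponds to a curve contracted on $Y$-side; a fiber of $f$ containing a singular point of $X$ must meet the exceptional locus of $g$, hence the corresponding fiber on $Y'$ is a degenerate/reducible fiber of a blown-up Hirzebruch surface. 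Since $\mathbb F_m$ has trivial degenerate fibers, each ``bad'' fiber of $f$ must have absorbed at least one blow-up during $Y'\dashrightarrow Y$, and the number of such blow-ups is controlled by $\rho(Y)-\rho(Y')=\rho(Y)-2$; combined with the nefness of $-(K_X+B)$, which bounds $\rho(Y)$ in terms of the number of exceptional curves over the singular points via a discrepancy/Noether-type inequality, this should cap the number of bad fibers. Getting the clean constant $4$ will require the numerical input that $-(K_X+B)$ nef pushed up to $Y$ gives $-(K_Y+B_Y+E)$ nef-ish with $E\ge 0$ the discrepancy divisor, and then an Euler-characteristic / $c_2$ count on $Y'$ with its degenerate fibers: a conic bundle over $\mathbb P^1$ with the relevant positivity has at most four singular fibers.

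For part (2b), under the extra hypothesis that $(X,B)$ is klt and $K_X+B\equiv 0$, the pushed-up divisor on $Y$ satisfies $K_Y+B_Y+E\equiv -(\text{effective exceptional})$ with equality forced to be quite rigid, so the anticanonical degree is ``used up'' more tightly; I expect to gain one fiber by observing that in the $K_X+B\equiv 0$ case one cannot afford a fourth degenerate fiber without violating $\sum(\text{fiber contributions})\le \deg$ of the relevant line bundle on $\mathbb P^1$, or equivalently by a direct log-canonical-threshold/orbifold-Euler-characteristic computation on the base: the orbifold $(\mathbb P^1,\Delta)$ coming from the canonical bundle formula for $f$ must itself be log Calabi–Yau or Fano, and $\deg(-(K_{\mathbb P^1}+\Delta))\ge 0$ leaves room for at most three points where the fiber (and hence the moduli/discriminant) contributes.

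The main obstacle I anticipate is part (2a): bounding the number of degenerate fibers by the absolute constant $4$ (rather than something growing with $\rho$) requires genuinely using the nefness of $-(K_X+B)$ and not just the MMP combinatorics — a Hirzebruch surface blown up at many points in distinct fibers still maps to $\mathbb P^1$ with many reducible fibers, so one must show those extra blow-ups would destroy the anti-nefness. Pinning down the exact Euler-characteristic or discrepancy inequality that yields ``$\le 4$'' cleanly, and checking it survives the reduction to the minimal resolution and the $\mathbb Q$-coefficient case, is where the real work lies; part (1) and part (2b) I expect to be comparatively routine once (2a)'s machinery is in place.
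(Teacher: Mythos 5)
Your outline for part (1) is essentially the paper's argument (minimal resolution, the strict transform of the bad fiber is a contractible $(-1)$-curve, contraction produces a non-tree dual graph contradicting Lemma \ref{lem: certain blow up tree}(2)), though you leave unexplained why the strict transform $F_W$ is a $(-1)$-curve; the paper gets this by running a $K_W$-MMP over $Z$ down to a geometrically ruled surface, which forces $F_W\cong\mathbb P^1$ and $F_W^2=-1$ since all the $E_i$ have self-intersection $\leq -2$. The real problem is (2a), and you correctly identify it as the main obstacle but do not close it. The paper's proof is not an Euler-characteristic or $c_2$ count on a blown-up Hirzebruch surface; it is the generalized canonical bundle formula applied to the g-pair $(X,B,\Mm)$ with $\Mm_X=-(K_X+B)$, giving $0=\deg(K_Z+B_Z+M_Z)$ with $M_Z$ pseudo-effective, together with the crucial geometric input (Lemma \ref{lem: fibers containing singular point is not Cartier}) that any fiber containing a singular point of $X$ is \emph{non-reduced} (proved by flatness of $f$ and cohomology and base change), hence has glc threshold $\leq\frac12$ and contributes $\geq\frac12$ to the discriminant $B_Z$. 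Then $0\geq -2+\frac n2$ gives $n\leq 4$. Your plan never identifies non-reducedness as the relevant invariant --- indeed fibers of the Mori fiber space are irreducible, so ``reducible/degenerate fibers'' on a resolution is not the right notion and does not by itself interact with the nefness of $-(K_X+B)$ to yield an absolute constant; as you concede, the inequality producing ``$\leq 4$'' is exactly what is missing.

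Your treatment of (2b) also has a genuine gap. The orbifold degree count on the base that you invoke only yields $n\leq 4$, not $n\leq 3$: four multiple fibers each contributing exactly $\frac12$ to $B_Z$ (with $M_Z\sim_{\mathbb Q}0$) are perfectly consistent with $\deg(K_Z+B_Z+M_Z)=0$, so ``leaves room for at most three points'' is false as stated. Eliminating the borderline case $n=4$ in the klt log Calabi--Yau setting requires an additional argument: the paper passes via the minimal resolution and Lemma \ref{lem: MMP get f1 surface} to a model $X'\cong\mathbb F_m$, writes the crepant pullback $K_{X'}+B'$, and observes that $B'-(f')^*B_Z$ lies in $|2\sigma+m\tau|_{\mathbb Q}$, every member of which contains the negative section; hence $B'$ contains the negative section with coefficient $\geq 1$, contradicting kltness of $(X',B')$. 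Without both the non-reducedness lemma in (2a) and this borderline analysis in (2b), the proposal does not prove the stated bounds.
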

\begin{proof}
By Lemma \ref{lem: antinef imply antinef q}, we may assume that $B$ is a $\Qq$-divisor. There exists a non-negative integer $n$, closed points $z_1,\dots,z_n\in Z$, and fibers $F_i:=f^{-1}z_i$ for each $i$, such that $F_1,\dots,F_n$ are the only closed fibers of $f$ which contain singular points of $X$. If $n=0$, there is nothing left to prove, so in the rest of the proof, we may assume that $n\geq 1$.

First we prove (1). Suppose that there exists a fiber $F$ of $f$, such that $F$ contains at least $3$ singular points of $X$ and $F=f^{-1}z$ for some closed point $z\in Z$. We let $g: W\rightarrow X$ be the minimal resolution of $X$, $E_1,\dots,E_m$ the $g$-exceptional divisors for some integer $m\geq 3$ such that $\Center_XE_i\in F$ for each $i$, and $F_W:=g^{-1}_*F$. Then $E_i^2\leq -2$ for each $i$. Possibly reordering indices, we may assume that $F_W$ intersects $E_1,E_2,E_3$.

We may run a $K_W$-MMP over $Z$, which induces a birational contraction $h: W\rightarrow Y$ between smooth projective varieties and a $K_Y$-Mori fiber space $f': Y\rightarrow Z$, such that $Y$ is a geometrically ruled surface. In particular, $h$ contracts $m$ elements of $\{F_W,E_1,\dots,E_m\}$. Since $Y$ is smooth and $X$ is not smooth, $F_W$ is contracted by $h$. Since $W$ is smooth, $h$ is a $K_W$-MMP over $Z$, and $E_i^2\leq -2$, we have that $F_W\cong\mathbb P^1$ and $F_W^2=-1$. Thus we may let $p: W\rightarrow T$ be the contraction of $F_W$, and there is an induced morphism $q: T\rightarrow Y$. Let $E_{i,T}:=p_*E_i$ for each $i$, then $E_{i,T}\cdot E_{j,T}\ge1$ for every $i,j\in\{1,2,3\}$ with $i\not=j$. Thus $\Dd(\cup_{i=1}^mE_{i,T})$ is not a tree, hence $\Dd(q)$ is not a tree, which contradicts Lemma \ref{lem: certain blow up tree}(2).

\medskip

Now we prove (2.a). We let $\Mm_X:=-(K_X+B)$ and $\Mm:=\overline{\Mm_X}$. Then $(X,B,\Mm)$ is a projective glc $\Qq$-g-pair. By the generalized canonical bundle formula (\cite[Theorem 1.4]{Fil18}, \cite[Theorem 1.2]{HL19}), we have
$$0=K_X+B+\Mm_X\sim_{\mathbb Q}f^*(K_Z+B_Z+M_Z)$$
such that $M_Z$ is pseudo-effective and $$\mult_{z}B_Z=1-\glct(X,B,\Mm;f^*z)$$
for any point $z\in Z$. By Lemma \ref{lem: fibers containing singular point is not Cartier}, each $f^*z_i$ is not reduced, hence $\mult_{z_i}B_Z\geq\frac{1}{2}$ for each $i$. Thus 
$$0=\deg(K_Z+B_Z+M_Z)\ge -2+n\cdot\frac{1}{2}+0=-2+\frac{n}{2},$$
which implies that $n\le 4$. Since $n\ge 1$, we have $\deg(K_Z)<0$ so $Z\cong\Pp^1$. Moreover, $n=4$ if and only if $M_Z\sim_{\mathbb Q}0$ and $B_Z=\frac{1}{2}\sum_{i=1}^4z_i$.

\medskip

Under the assumptions of (2.b), by abundance, $K_X+B\sim_{\mathbb Q}0$, so $$K_X+B\sim_{\mathbb Q}f^*(K_Z+B_Z+M_Z)$$
is the canonical bundle formula for $K_X+B$ as well. Let $g: Y\rightarrow X$ be the minimal resolution of $X$ and $h: X\rightarrow X'$ a $K_Y$-MMP over $Z$ which terminates with a Mori fiber space $f': X'\rightarrow Z$. By Lemma \ref{lem: MMP get f1 surface}, we may assume that $X'\cong\mathbb F_m$ for some positive integer $m$. Let $K_{X'}+B':=h_*g^*(K_X+B)$, then $$K_{X'}+B'\sim_{\mathbb Q}(f')^*(K_Z+B_Z+M_Z)$$
is the canonical bundle formula for $K_{X'}+B'$, and $(X',B')$ is klt. 

Assume that $n=4$. Then $M_Z\sim_{\mathbb Q}0$ and $B_Z=\frac{1}{2}\sum_{i=1}^4z_i$. Let $\tau$ and $\sigma$ denote the fiber and the negative section of $\mathbb F_m$, and $D$ the only element of $|\sigma|$. Then $-K_{X'}\sim 2\sigma+(m+2)\tau$. By the definition of the canonical bundle formula, $B'\geq (f')^*B_Z\sim_{\mathbb Q}2\tau$, which implies that $0\leq B'-(f')^*B_Z\in |2\sigma+m\tau|_{\mathbb Q}$. Thus for any element $D'\in |2\sigma+m\tau|_{\mathbb Q}$, $D'\geq D$ (cf. \cite[Chapter 5, Proposition 3]{Fri98}). So $B'\geq D$, hence $(X',B')$ is not klt, a contradiction.
\end{proof}

\begin{proof}[Proof of Theorem \ref{thm:General Calabi Yau Case} (2)(3)]
Since $K_X$ is not pseudo-effective, we may run a $K_X$-MMP which terminates with a Mori fiber space $f: Y\rightarrow Z$. Let $g: X\rightarrow Y$ be the induced morphism and $B_Y:=g_*B$, then $-(K_Y+B_Y)$ is nef. Moreover, if $(X,B)$ is klt log Calabi-Yau, then $(Y,B_Y)$ is klt log Calabi-Yau.
\medskip

\noindent\textbf{Case 1}. $\dim Z=0$. In this case, $\rho(Y)=1$ and $Y$ is klt Fano, so $f$ is a composition of $\rho(X)-1$ divisorial contractions between klt surfaces. By Lemma \ref{lem: divisorial contraction reduce sing by 2} and Theorem \ref{thm: four singular point},
$$n(X)\leq n(Y)+2(\rho-1)\leq 4+2(\rho-1)=2\rho+2.$$ 

\medskip

\noindent\textbf{Case 2}. $\dim Z=1$. In this case, $\rho(Y)=2$, so $f$ is a composition of $\rho(X)-2$ divisorial contractions between klt surfaces. By Lemma \ref{lem: divisorial contraction reduce sing by 2}, $n(X)\leq n(Y)+2(\rho-2)$. By Lemma \ref{lem: number singlarity picard 2 case}, $n(Y)\leq 8$ and $n(Y)\leq 6$ when $(X,B)$ is klt log Calabi-Yau. Thus $n(X)\leq 2\rho(X)+4$ and $n(X)\leq 2\rho(X)+2$ when $(X,B)$ is klt log Calabi-Yau and $B\not=0$.
\end{proof}

%\begin{thm}\label{thm: non pseudo-eff case of log calabi-yau }
%Let $X$ be a klt log Calabi-Yau surface such that $K_X$ is not pseudo-effective, then the number of singular point(s) is less than $2\rho(X)+4$.
%\end{thm}

%\begin{proof}
%We run a $K_X$-MMP which terminates with a Mori fiber space $f: X'\rightarrow Z$. Let $g: X\rightarrow X'$ be the induced morphism. Since $X$ is log Calabi-Yau, $X'$ is also log Calabi-Yau. We let $N$ be the number of singular points on $X$ and $N'$ the number of singular points of $X'$. There are two cases:

%\medskip

%\noindent\textbf{Case 1}. $\dim Z=0$. In this case, $\rho(X')=1$ and $X'$ is since $K_{X'}$ is not pseudo-effective, so $f$ is a composition of $\rho-1$ divisorial contractions between klt surfaces. By Lemma \ref{lem: divisorial contraction reduce sing by 2}, $N\leq N'+2(\rho-1)$. By \cite{Bel08,Bel09}, $N'\leq 4$, so $N\leq 2\rho+2$.

%\medskip

%\noindent\textbf{Case 2}. $\dim Z=1$. In this case, $\rho(X')=2$, so $f$ is a composition of $\rho-2$ divisorial contractions between klt surfaces. By Lemma \ref{lem: divisorial contraction reduce sing by 2}, $N\leq N'+2(\rho-2)$. By Corollary \ref{cor:pic num 2 calabi-yau case}, $N'\leq 8$, so $N\leq 2\rho+4$.
%\end{proof}

%\begin{lem}\label{lem: canonical K_X=0 case}
%Let $X$ be a canonical surface such that $K_X\equiv0$, then $X$ has at most $16$ singular points.
%\end{lem}

\begin{proof}[Proof of Theorem \ref{thm:General Calabi Yau Case}(4)]
Since $X$ is klt but not canonical and $K_X\equiv0$, there exists an extraction $f:Y\to X$ of a prime divisor $E$ such that $Y$ is klt and $K_Y+aE=f^*K_X\equiv0$ for some positive real number $a$. By Theorem \ref{thm:General Calabi Yau Case}(3),
$$n(Y)\leq 2\rho(Y)+4=2\rho(X)+6,$$ 
thus $n(X)\leq n(Y)+1\leq 2\rho(X)+7$.
\end{proof}

\begin{proof}[Proof of Theorem \ref{thm:General Calabi Yau Case}(5)]
By abundance, $K_X\sim_{\mathbb Q}0$, hence there exists a smallest positive integer $m$ such that $mK_X\sim0$. Since $K_X$ is Cartier, there exists an \'{e}tale cyclic cover $Y\to X$ of degree $m$ such that $K_Y\sim 0$. In particular, $Y$ is canonical and $n(X)\leq n(Y)$ (cf. \cite[Lemma 2.51]{KM98}). 

Let $f: W\rightarrow X$ be the minimal resolution of $Y$. Then $K_W=f^*K_Y\sim 0$, hence $W$ is either an abelian surface or a smooth K3 surface. If $W$ is an abelian surface, then $W$ does not contain any rational curves, so $W=Y$ and hence $n(Y)=0$. If $W$ is a smooth K3 surface, then $Y$ is a K3 surface with at most canonical singularities. By \cite[Corollary 4.6]{Pet20}, $n(Y)\leq 16$. Thus $n(X)\leq n(Y)\leq 16$.
\end{proof}

%\begin{lem}\label{lem: klt not canonical case}
%Let $X$ be a klt but not canonical surface such that $K_X\equiv0$, then the number of singular point(s) on $X$ is less than $2\rho(X)+7$.
%\end{lem}

%\begin{lem}\label{lem: lc not klt case}
%Let $X$ be a log Calabi-Yau surface, then the number of singular point(s) on $X$ is less than $2\rho(X)+10$.
%\end{lem}

\begin{proof}[Proof of Theorem \ref{thm:General Calabi Yau Case}(6)(7)]
By Lemma \ref{lem: lc surface pair q gorenstein}, $K_X$ is $\Qq$-Cartier. By (3), we may assume that $X$ is not klt, hence there exists at least $1$ point on $X$ where $X$ is not klt.
 By applying the connectedness theorem (\cite[Proposition 3.3.2]{Pro01},\cite[Theorem 1.2]{HH19},\cite[Theorem 1.2(1)]{Bir20}) to $(X,B)$ (or apply \cite[Theorem 1.1]{FS20} to the g-pair $(X,B,\Mm:=\overline{-(K_X+B)})$; see also \cite[Lemma 6.9]{Sho92}), we know that there exist at most 2 points on $X$ where $X$ is not klt. If $-K_X$ is big and nef, then by the Shokurov-Koll\'ar connectedness principle, there exists exactly 1 point on $X$ where $X$ is not klt.
 
 Thus there exists an extraction $f: Y\rightarrow X$ and a divisor $E\geq 0$ on $X$, such that $Y$ is klt, $1\leq\rho(Y)-\rho(X)\leq2$, $K_Y+E=f^*(K_X+B)$, $(Y,E)$ is lc, and $-(K_Y+E)$ is nef. Moreover, $\rho(Y)-\rho(X)=1$ when $-K_X$ is big and nef. In particular, $K_Y\not\equiv 0.$ By Theorem \ref{thm:General Calabi Yau Case}(3), $n(Y)\leq 2\rho(Y)+4$, hence $n(X)\leq n(Y)+2\leq2\rho(X)+10$, and $n(X)\leq n(Y)+1\leq2\rho(X)+7$ when $-K_X$ is big and nef.
\end{proof}

\begin{proof}[Proof of Theorem \ref{thm:General Calabi Yau Case}]
We only left to prove (1), which follows from (3)(4)(5)(6).
\end{proof}

\begin{proof}[Proof of Theorem \ref{thm: number sing surface is 2rho+2}]
It follows from Theorem \ref{thm:General Calabi Yau Case}(2).
\end{proof}

%\begin{proof}[Proof of Theorem \ref{thm:General Calabi Yau Case}]
%The statements follows directly by Theorem \ref{thm: non pseudo-eff case of log calabi-yau }, \ref{thm: number sing surface is 2rho+2} and Lemma \ref{lem: canonical K_X=0 case}, \ref{lem: klt not canonical case}, \ref{lem: lc not klt case}.

%\end{proof}

\section{Examples on surfaces}

In this section, we discuss how far our bounds in Theorem \ref{thm:General Calabi Yau Case} are away from being optimal. The following Example-Proposition shows that even when $\rho(X)=2$,
\begin{enumerate}
    \item the assumption ``$-(K_X+B)$ is nef" is necessary in Theorem \ref{thm:General Calabi Yau Case},
    \item Theorem \ref{thm:General Calabi Yau Case}(2) is optimal even when $X$ is klt Fano, and
    \item Theorem \ref{thm:General Calabi Yau Case}(3) is optimal.
\end{enumerate} 

\begin{exprop}\label{exm:2rho+2 is optimal for rho=2}
Let $n$ be a positive integer, $Z:=\mathbb P^1\times\mathbb P^1$, and $z_i:=(u_i,v_i)\in Z$ closed points in $Z$ for any $i\in\{1,2,\dots,n\}$ such that $u_i\not=u_j$ for any $i\not=j$. We let $p_1: Z\rightarrow\mathbb P^1$ and $p_2:Z\rightarrow\mathbb P^1$ are the first and second projection of $Z$ to $\Pp^1$, and $L_i:=p_1^*u_i$ and $R_i:=p_2^*v_i$ for each $i$.

We let $f: Y\rightarrow Z$ be the blow-up of $z_1,\dots,z_n$. For each $i$, we let $E_i$ be the exceptional curve of $f$ over $z_i$, $L_{i,Y}:=f^{-1}_*L_i$, $R_{i,Y}:=f^{-1}_*R_i$, and $y_i:=L_{i,Y}\cap E_i$. 
%Then $L_{i,Y}^2=E_{i}^2=-1$ for each $i$. 
We let $g: X\rightarrow Y$ be the blow-up of $y_1,\dots,y_n$. For each $i$, we let $F_i$  be the exceptional curve of $g$ over $y_i$, $L_{i,X}:=g^{-1}_*L_{i,Y}$, $R_{i,X}:=g^{-1}_*R_{i,Y}$, and $E_{i,X}:=g^{-1}_*E_i$. 
%Then $L_{i,X}^2=E_{i,X}^2=-2$ for each $i$. 

We let $h: X\rightarrow S$ be the contraction of $E_{1,X},\dots,E_{n,X}$ and $L_{1,X},\dots,L_{n,X}$. For each $i$, we let $F_{i,S}:=h_*F_i$, $R_{i,S}:=h_*R_{i,X}$, $s_i:=h(E_{i,X})$, and $t_i:=h(L_{i,X})$. Then $s_1,\dots,s_n$ and $t_1,\dots,t_n$ are the only singular points on $S$ and are $\frac{1}{2}(1,1)$ singularities. 

%We have $$h^*K_S=K_X=(f\circ g)^*K_Z+\sum_{i=1}^n(E_{i,X}+2F_i),$$
%and
%$$(f\circ g)^*L_i=L_{i,X}+E_{i,X}+2F_i$$
%for each $i$. We have the following:
\begin{enumerate}
\item When $t_1=t_2\dots=t_n$, $-K_S$ is effective.
\item When $n=4$ and $t_1=t_3\not=t_2=t_4$, $(S,B)$ is lc log Calabi-Yau for some $B$.
%$(S,R_{1,S}+R_{2,S})$ is lc and $2(K_S+R_{1,S}+R_{2,S})\sim 0$.
\item When $n=3$ and $t_i\not=t_j$ for any $i\not=j$, $S$ is klt Fano.
\end{enumerate}
\end{exprop}
\begin{proof}
Most of the proofs are are elementary computations on pullbacks and pushforwards of divisors which we omit. In (1), $-K_S\sim 4F_{1,S}+2R_{1,S}\geq 0$. In (2), we may pick $B=R_{1,S}+R_{2,S}$. In (3), $-K_S\sim 2R_{i,S}$, $R_{i,S}^2=\frac{1}{2}$, $s_i\in R_{i,S}$, and $t_i\in R_{j,S}$ for any $i\not=j$. Thus $-K_S$ is nef and big and we may let $\phi: S\rightarrow T$ be the ample model of $-K_S$.

If $S\not=T$, then $-K_S$ is not ample, and $\phi$ contracts an irreducible curve $C\subset S$ such that $-K_S\cdot C=0$. Since $\rho(S)=2$, $T$ is a klt Fano variety and $\rho(T)=1$. Since $-K_S\sim 2R_{i,S}$ for any $i$ and $R_{i,S}^2>0$, $C$ does not intersect $R_{i,S}$ for any $i$, so $C$ is contained in the smooth locus of $S$. Thus $n(T)\geq n(S)=6$, which contradicts Theorem \ref{thm: four singular point}. 

Thus $S=T$, hence $-K_S$ is ample, and we are done.
\end{proof}

The following example shows that even when $\rho(X)=1$ and $X$ is Fano,
\begin{enumerate}
    \item Theorem \ref{thm: four singular point} is optimal,
    \item the bound ``$2\rho(X)+2$" is not enough if $X$ is not klt, and
    \item the assumption ``$(X,B)$ is lc" is necessary for Theorem \ref{thm:General Calabi Yau Case}.
\end{enumerate} 

\begin{ex}\label{ex: lc is necessary}
Assumptions and notations as in Example-Proposition \ref{exm:2rho+2 is optimal for rho=2} and assume that $t_1=t_2\dots=t_n$. Let $R':=p_2^*v$ for some $v\not=v_1$, and $R'_{S}:=h_*((f\circ g)^{-1}_*R')$. Since the intersection matrix of $R_{1,X}\cup_{i=1}^n(E_{i,X}\cup L_{i,X})$ is negative definite, there exists a contraction $\phi: S\rightarrow T$ of $R_{1,S}$. In particular, $\rho(T)=1$. Since $$D:=-(K_S+\frac{2(n-2)}{n}R_{1,S})\sim 4F_{1,S}+\frac{4}{n}R_{1,S}$$
is big and nef and $\phi$-trivial, and since $nD\sim 4nF_{1,S}+4R_{1,S}\sim 4nF_{2,S}+4R_{1,S}\sim 4R'_{S}$, $|nD|$ is base-point-free and defines $\phi$. Thus $nD\sim\phi^*\phi_*(nD)$, and in particular, $-K_T=\phi_*D$ is ample. Since $a(R_{1,S},T,0)=\frac{4-n}{n}$, we have:
 \begin{enumerate}
     \item When $n=3$, $T$ is a klt Fano surface, $\rho(T)=1$ and $n(T)=4$. 
     \item When $n=4$, $T$ is an lc Fano surface, $\rho(T)=1$ and $n(T)=5>2\rho(T)+2$.
     \item When $n\geq 5$, $T$ is a non-lc Fano surface, $\rho(T)=1$ and $n(T)=n+1$. When $n\rightarrow+\infty$, $n(T)\rightarrow+\infty$.
 \end{enumerate}
\end{ex}

%Fix $u\in\mathbb P^1$ such that $u\not=u_i$ for each $i$, and let $L:=p_1^*u$. 
The following well-known example shows that Theorem \ref{thm:General Calabi Yau Case}(5) is optimal: 
\begin{ex}
Some Kummer surfaces are canonical K3 surfaces with 16 singular points.
\end{ex}

We do not know if Theorem \ref{thm:General Calabi Yau Case}(4)(6) are optimal or not even when $\rho(X)=1$, and we do not now if Theorem \ref{thm:General Calabi Yau Case}(2)(3) are optimal when $\rho(X)$ is large. We guess that under the assumption of Theorem \ref{thm:General Calabi Yau Case}, $n(X)\leq\rho(X)+C$ for some constant number $C$, but we do not know how to prove this yet. The next example shows that the linear term $\rho(X)$ is necessary in an expression of an upper bound of $n(X)$ even when $X$ is klt Fano.

\begin{ex}
Fix a positive integer $n\ge 2$, let $e_1=(1,0),~e_2=(0,1)\in \mathbb R^2$, and $u_{-1}=-e_1,~u_i=ie_1+(i^2-1)e_2~(0\le i\le n)$. Then each $u_i$ is primitive. Now let $\Sigma$ be the complete fan in $N_{\mathbb R}=\mathbb R^2$ generated by rays $u_{-1},u_0,\dots,u_n$. Then  the projective toric surface $X_{\Sigma}$ is klt Fano with $\rho(X_{\Sigma})=n+2-2=n$. The number of singular points corresponds to the number of non-smooth maximal cones in $\Sigma(2)=\{\text{Cone}(u_{i-1},u_i),\text{Cone}(u_n,u_{-1}))~|0\le i\le n\}$. Notice that $\text{Cone}(u_n,u_{-1}),\text{Cone}(u_{i-1},u_{i})$ ($2\leq i\leq n$) are not smooth because none of $\{u_n,u_{-1}\},\{u_{i-1},u_{i}\}$ ($2\leq i\leq n$) generates $N=\mathbb Z^2$. Thus $X_{\Sigma}$ has exactly $n$ singular points.
\end{ex}

\section{Discussions}

For toric varieties, the singular locus is torus invariant and thus can be nicely described as a disjoint union of torus orbits:
\begin{thm}\label{thm: torid bdd number sing}
If $X$ is a proper $\mathbb Q$-factorial toric variety of dimension $d$, then for any $2\le k\le d$, there exists a polynomial $h_k$ of degree $\le\min\{k, d-1\}$ such that the number of torus invariant singular points of codimension $k$ on $X$ is $\leq h_k(\rho(X))$. 
\end{thm}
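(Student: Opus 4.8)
The plan is to reduce the statement to a purely combinatorial estimate about the defining fan $\Sigma$ of $X$. Recall that for a proper $\mathbb{Q}$-factorial toric variety $X=X_\Sigma$ of dimension $d$, the torus-invariant points correspond to the $d$-dimensional cones $\sigma\in\Sigma(d)$, and a torus-invariant \emph{singular} point of codimension $k$ corresponds to a cone $\sigma$ whose associated affine toric variety $U_\sigma$ has singular locus of codimension exactly $k$; equivalently, $\sigma$ is a simplicial cone generated by primitive vectors $v_1,\dots,v_d$ whose $\mathbb{Z}$-span has index $>1$ in $N$ with the singularity stratum being of the expected codimension. So the quantity to bound is the number of $d$-cones of a given "singularity type" in $\Sigma$, in terms of $\rho(X_\Sigma)=|\Sigma(1)|-d$.

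First I would set $r:=|\Sigma(1)|=\rho(X)+d$ and note that, since $\Sigma$ is simplicial (by $\mathbb{Q}$-factoriality) and complete, $|\Sigma(d)|$ is bounded by a polynomial in $r$: every maximal cone is spanned by a $d$-subset of the $r$ rays, giving the crude bound $|\Sigma(d)|\le\binom{r}{d}$, but in fact for a simplicial complete fan one has $|\Sigma(d)|=O(r^{\lfloor d/2\rfloor})$ by the Upper Bound Theorem for simplicial polytopes/spheres (the boundary complex of $\Sigma$ is a simplicial $(d-1)$-sphere with $r$ vertices). This already gives degree $\le d-1$ in $\rho(X)=r-d$. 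For the improvement to degree $\le k$, the key observation is that a singular $d$-cone of codimension $k$ must have a face of dimension roughly $k$ along which $U_\sigma$ is already singular, and one can control how many $d$-cones share such a "bad" low-dimensional face: the number of singular codimension-$k$ points is bounded by (number of candidate $k$-dimensional "singular seeds") times (maximal number of $d$-cones containing a fixed $k$-cone). The first factor is $O(r^k)$ trivially (choosing $k$ rays), and the second factor — the number of $d$-cones containing a fixed $(d-k)$-face, i.e. the number of vertices in the link of that face — is bounded independently of $r$ only if $d-k$ is not too small; when it is small one instead uses the link is a sphere of dimension $k-1$ so its vertex count is $O(r^{\lfloor (k-1)/2\rfloor})$, combining to the claimed $\min\{k,d-1\}$.

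The cleanest route, which I would try to make precise, is: (i) show each torus-invariant singular point of codimension $k$ determines a $k$-dimensional cone $\tau\in\Sigma(k)$ (spanned by $k$ of the rays of $\sigma$) such that the multiplicity/index obstruction already lives on $U_\tau$ — this is where one uses the precise structure of quotient singularities $U_\sigma\cong \mathbb{A}^d/G$ and the fact that the singular locus has pure codimension $k$ forces the "group action" to fix a codimension-$k$ stratum; (ii) bound the number of such $\tau$ by $O(r^k)=O(\rho(X)^k)$; (iii) for each $\tau$, bound the number of maximal cones $\sigma\supset\tau$ of the relevant type by the number of maximal cones in the star of $\tau$, and bound that using completeness and simpliciality — here the star/link of $\tau$ is a complete simplicial fan of dimension $d-k$, with at most $r$ rays, so its number of maximal cones is $O(r^{\lfloor(d-k)/2\rfloor})$ by the Upper Bound Theorem; (iv) multiply: $O(r^k)\cdot O(r^{\lfloor(d-k)/2\rfloor})$, and check that the exponent is $\le\min\{k,d-1\}$ (for the $d-1$ part one just uses the global bound $|\Sigma(d)|=O(r^{\lfloor d/2\rfloor})$ and $\lfloor d/2\rfloor\le d-1$, absorbing the $O(r^k)$ factor into the choice of which $d$-cone, which is wasteful but enough). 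Collecting terms, the count is a polynomial in $r=\rho(X)+d$, hence in $\rho(X)$ (with $d$ fixed), of degree $\le\min\{k,d-1\}$, which is the desired $h_k$.

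The main obstacle I anticipate is step (i): pinning down exactly which $k$-dimensional face of a singular $d$-cone carries the obstruction, and making sure that the assignment $\sigma\mapsto\tau$ together with the "star of $\tau$" bookkeeping in step (iii) genuinely avoids overcounting and genuinely caps the contribution of each $\tau$ by something of controlled degree. A subtlety is that a codimension-$k$ singular stratum of $U_\sigma$ need not be cut out by a single face in an obvious way when $G$ acts with several isotropy strata; one may need to argue that the \emph{codimension exactly $k$} condition forces the relevant isotropy subgroup to be the stabilizer of a specific coordinate subspace, corresponding to a canonical face. If that canonical assignment is awkward, the fallback is the purely enumerative bound via the Upper Bound Theorem applied to $\Sigma$ and to all its stars simultaneously, which still yields a polynomial of degree $\le d-1$ in $\rho(X)$ and the sharper degree $\le k$ whenever $k\le d-1$ follows by noting $\lfloor d/2\rfloor$ is not always $\le k$, so one really does need the localized link argument to get the full $\min\{k,d-1\}$ — that is the part I expect to require the most care.
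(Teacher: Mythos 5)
There is a genuine gap, and it starts with what is being counted. In the theorem, ``torus invariant singular points of codimension $k$'' are the codimension-$k$ torus-invariant points of $X$ contained in $\mathrm{Sing}(X)$, i.e.\ the (generic points of the) codimension-$k$ torus orbits lying in the singular locus; by the orbit--cone correspondence these are exactly the non-smooth cones $\tau\in\Sigma(k)$. You instead count closed torus-fixed points, i.e.\ maximal cones $\sigma\in\Sigma(d)$, whose local model $U_\sigma$ has singular locus of codimension $k$ --- a different quantity. Under the intended reading the proof is essentially immediate and is what the paper does: $\Sigma$ is simplicial and complete with $|\Sigma(1)|=\rho(X)+d$ rays, so for $2\le k\le d-1$ the count is at most $|\Sigma(k)|\le\binom{\rho(X)+d}{k}$, a polynomial of degree $k$ in $\rho(X)$ (this is exactly your ``trivial $O(r^k)$'' factor, and it already finishes these cases), while for $k=d$ one bounds $|\Sigma(d)|$ by a polynomial of degree $d-1$ using $\chi(S^{d-1})=\sum_{j=1}^{d}(-1)^{j-1}|\Sigma(j)|$; your Upper Bound Theorem estimate $O(r^{\lfloor d/2\rfloor})$ also works for this last case and is even sharper, but is not needed.

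Under your reading, the degree-$\le k$ part of the statement is not just hard to reach by your bookkeeping --- it is false, so the step you flag as ``requiring the most care'' cannot be repaired. Take $X=S\times Y$, where $S$ is a complete simplicial toric surface whose fan contains a non-smooth $2$-cone $\tau$, and $Y$ is a complete simplicial toric $(d-2)$-fold whose fan (e.g.\ the fan over a cyclic simplicial $(d-2)$-polytope with rational vertices and the origin inside) has $m$ rays and on the order of $m^{\lfloor (d-2)/2\rfloor}$ maximal cones. Every maximal cone $\tau\times\sigma$ of the product fan contains the singular face $\tau\times\{0\}$, so each corresponding fixed point lies on a codimension-$2$ singular stratum and would be counted by you for $k=2$; their number grows like $\rho(X)^{\lfloor(d-2)/2\rfloor}$, which exceeds degree $2$ once $d\ge 8$. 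This matches the arithmetic you leave open in step (iv): the exponent $k+\lfloor(d-k)/2\rfloor$ is genuinely larger than $k$ when $d-k\ge 2$, and the example shows it is essentially attained, so only the degree-$(d-1)$ half of your bound survives under your interpretation. The fix is simply to count the singular cones $\tau\in\Sigma(k)$ themselves rather than the maximal cones containing them.
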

\begin{proof}
Let $\Sigma$ be the complete fan in $N_{\mathbb R}\cong \mathbb R^d$ which defines $X$, then the cones in $\Sigma$ are all simplicial and naturally gives a triangulation of $S^{d-1}\cong\{\mathbb R^d-0\}/(x\sim \lambda x)$, where each cone of dimension $k\ge 1$ corresponds to a $(k-1)$-simplex. 

 Recall $\Sigma(k)$ is the set of $k$ dimensional cones in $\Sigma$, then we have $\rho(X)+d=|\Sigma(1)|$ and $|\Sigma(k)|\le\binom{|\Sigma(1)|}{k}$. Thus any $|\Sigma(k)|~(1\le k\le d-1)$ is bounded by a polynomial of $\rho(X)$ with degree $\le k$. Also, we have $1-(-1)^d=\chi(S^{d-1})=\sum_{k=1}^{d}(-1)^{k-1}|\Sigma(k)|$. Hence $|\Sigma(d)|$ is bounded by a polynomial of $\rho(X)$ with degree $\le d-1$. Since the torus invariant singular points correspond to torus orbits in Sing$(X)$, the statements follows directly by the orbit-cone correspondence theorem.
 
 %we see that there exists a polynomial $h$ of degree $\le d-1$ such that the number of torus invariant singular points of $X$ is $\le h(\rho(X))$.
\end{proof}

It is natural to ask whether we can have a bound on the number of singular points in high dimensions for non-toric klt Fano varieties with bounded Picard number as well. However, the first question is: since the singular locus may be of dimension $>0$, how can we effectively define the ``number of singular points" for a non-toric variety?

The most straightforward idea is to consider the number of isolated singular points. Unfortunately, we have the following counterexample for klt Fano varieties with only isolated singularities of Picard number 1 even in dimension 3. This example is given by Chen Jiang:

\begin{exprop}\label{exprop: ex threefold unbounded isolated singularities}
Fix a positive integer $k$. Let $X=X_{6k+3}\subset\mathbb P(1,3,3,3k+1,3k+2)$ be a general hypersurface of degree $6k+3$. Then
\begin{enumerate}
\item $X$ is quasismooth klt Fano of Picard number $1$, and
\item $X$ contains exactly the following singularities:
\begin{enumerate}
\item a cyclic quotient singularity of type $\frac{1}{3k+1}(1,3,3)$,
\item a cyclic quotient singularity of type $\frac{1}{3k+2}(1,3,3)$, and
\item $(2k+1)$ cyclic quotient singularities of type $\frac{1}{3}(1,1,2)$.
\end{enumerate}
\end{enumerate}
\end{exprop}
\begin{proof}
(1) follows from \cite[Theorem 8.1]{IF00} (see also \cite[Theorem 2.7]{CJL20}) and \cite[Theorem 3.2.4(i)]{Dol82}. (2) follows from (1) and \cite[Section 9--10]{IF00} (see also \cite[Theorem 2.8]{CJL20}).
\end{proof}

Nevertheless, we may still ask the following questions. These questions arise in personal communications of the first author with Paolo Cascini, Christopher D. Hacon, Jingjun Han and Chen Jiang during the summer of 2020.

\begin{ques}
Let $d,\rho$ be two positive integers. Does there exist a positive integer $N_1=N_1(d,\rho)$, such that for any klt Fano variety $X$ of dimension $d$ with $\rho(X)\leq\rho$, the number of isolated non-terminal singularities of $X$ is $\leq N_1$?
\end{ques}

\begin{ques}
Let $d,\rho$ be two positive integers. Does there exist a positive integer $N_2=N_2(d,\rho)$, such that for any klt Fano variety $X$ of dimension $d$ with $\rho(X)\leq\rho$, the number of codimension $2$ singularities of $X$ is $\leq N_2$?
\end{ques}
Theorem \ref{thm: number sing surface is 2rho+2} answers these two questions when $d=2$, but both questions seem to be widely open in dimension $\geq 3$ even when $\rho=1$. We remark that if we have satisfactory answers for these questions in the Picard number 1 case, then the methods used in our paper are expected to be applied to prove the bounded Picard number cases.

For similar questions and results, we also refer the readers to \cite{BMSZ18}.

\end{document}